\newtheorem{theorem}{Theorem}
\newtheorem{lemma}{Lemma}
\newtheorem{corollary}{Corollary}
  \newtheorem{definition}{Definition}
 \newtheorem{proposition}{Proposition}
\newcommand{\black}{\color{black}}
\title{ Star-Varieties of proper central exponent greater than two }
\author{Francesca S. Benanti}
\address{Dipartimento di Matematica e Informatica, Universit\`a di Palermo, Via Archirafi 34, 90123, Palermo, Italy}
\email{francescasaviella.benanti@unipa.it}
\author{Angela Valenti}
\address{Dipartimento di Ingegneria, Universit\`a di Palermo, Viale delle Scienze, 90128, Palermo, Italy}
\email{angela.valenti@unipa.it}
\thanks{The authors are partially supported by  Fondo Finalizzato alla Ricerca dell'Università degli Studi di Palermo  (FFR) – Anno 2025.}
\subjclass[2020]{Primary 16R10, 16R50; Secondary 16P90, 16W10.}
\keywords{algebras with involution, central polynomial, polynomial identity, growth.}
\begin{document}

\begin{abstract}
		Let $F$ be a field of characteristic zero and let $ \mathcal V^* $ be a variety of associative $F$-algebras with involution *.   Associated to   $ \mathcal V^* $  are  three sequences: the sequence of \(*\)-codimensions \( c^{*}_n(\mathcal V^*) \), the sequence of  central \(*\)-codimensions \( c^{*,z}_n(\mathcal V^*) \) and the sequence of proper central \(*\)-codimensions \( c^{*,\delta}_n(\mathcal V^*) \).
        These sequences provide information on the growth of, respectively, the *-polynomial identities, the  central *-polynomial and the proper central *-polynomial of any generating algebra with involution $A$ of $ \mathcal V^*.$

        In \cite{MR2022} it was  proved that $exp^{*,\delta}(\mathcal V^*)=\lim_{n\to\infty}\sqrt[n]{c_n^{*,\delta}(\mathcal V^*)}$ exists and is an integer called the proper central $*$-exponent.
   The aim of this paper is to study the varieties of associative
		algebras with involution of proper central $*$-exponent greater than two.
        To this end we construct a finite list of algebras with involution and we prove that if $exp^{*,\delta}(\mathcal V^*) >2$, then at least one of these algebras belongs to $\mathcal V^*$.

	\end{abstract}
\date{}

\maketitle

\section{Introduction}

 Let $ A $ be an associative algebra with involution \( * \) over a field \( F \) of characteristic zero and let us denote by $A^+ = \{a \in A \,| \,a^*= a\} $ and $A^- = \{a \in A \, | \, a^*= -a \} $ the sets of symmetric and skew elements of $A.$ Let \( F\langle X, * \rangle = F\langle x_1,x^*_1, x_2,x^*_2 \dots \rangle\)   be the free associative algebra with involution, freely generated over \( F \) by a countable set \( X \) of non-commutative variables. We shall denote by $x_i^+= x_i +x^*_i$ the symmetric variables and by $x_i^-= x_i -x^*_i$ the skew variables.
    A $*${-polynomial}
\[
f(x^+_1, \dots, x^+_r, x^-_1, \dots, x^-_s) \in F\langle X, * \rangle
\]
is said to be a {central $*$-polynomial} for $A$ if, for all substitutions
$a^+_1, \dots, a^+_r \in A^+$ and $a^-_1, \dots, a^-_s \in A^-$, we have
\[
f(a^+_1, \dots, a^+_r, a^-_1, \dots, a^-_s) \in Z(A),
\]
where $Z(A)$ denotes the center of $A$.

    If \( f \) always evaluates to zero, it is called a $*$-{polynomial identity} of \( A \). If \( f \) evaluates to a nonzero element in \( Z(A) \), then it is said to be a proper central $*$-polynomial.

  Let \( P^*_n \) denote the space of multilinear \(*\)-polynomials in \( n \) variables.
Associated with this space we define the following three numerical sequences:
  \( c^*_n(A) \),  the dimension of \( P^*_n \) modulo the $*$-identities of \( A \),
  	 \( c^{*,z}_n(A) \), the dimension of \( P^*_n \) modulo the central $*$-polynomials of \( A \),
  	 and
 $$ c^{*,\delta}_n(A) = c^*_n(A) - c^{*,z}_n(A),
 $$
  that is the dimension of the space of proper central \(*\)-polynomials of degree \( n \).

 If \( A \) satisfies a non-trivial polynomial identity (i.e., \( A \) is a PI-algebra), the sequence of \(*\)-codimensions \( c^*_n(A) \) (for \( n = 1, 2, \ldots \)) is exponentially bounded, as established in \cite{Amitsur}, \cite{GR1985} and \cite{BGZ1999}. The same holds for the sequences of central \(*\)-codimensions \( c^{*,z}_n(A) \) and proper central \(*\)-codimensions \( c^{*,\delta}_n(A) \).

  Computing the explicit values of these sequences is extremely difficult, with only a few known examples. Therefore the research has instead focused on analyzing their asymptotic behavior. It turns out that the exponential growth of all three sequences has been studied and the following limits,  called the $\ast$-exponent, the central  $\ast$-exponent and the proper central $\ast$-exponent,  exist and are integers \cite{GPV2017}, \cite{MR2022}:
    \[
    exp^\ast(A) = \lim_{n \to \infty} \sqrt[n]{c^*_n(A)}, \quad
    exp^{\ast,z}(A) =\lim_{n \to \infty} \sqrt[n]{c^{*,z}_n(A)}, \quad
    exp^{\ast,\delta}(A) =\lim_{n \to \infty} \sqrt[n]{c^{*,\delta}_n(A)}.
    \]

    These three limits are invariants of \( \operatorname{Id}^*(A)  \),  the  $T^*$-ideal  of the *-polynomial identities of $A$, and it is rather remarkable that they correspond to the dimensions of certain subalgebras of a finite-dimensional superalgebra closely associated with this $T^*$-ideal.

   A crucial feature of these sequences is that they either grow exponentially or are polynomially bounded \cite{GPV2017}, \cite{MR2022}. Hence intermediate growth does not occur.

   These ideas were first introduced in the general framework of associative algebras, without additional structure. For an overview of the theory of PI-algebras, we refer the reader to \cite{{GZbook}} and \cite{{AGPR}}. In this context, analogous sequences are defined: $c_n(A),$ ${c^{z}_n(A)}$ and $c^{\delta}_{n}(A)$, representing the ordinary codimensions, central codimensions and proper central codimensions, respectively. For a   PI-algebra $A$, all three sequences exhibit exponential growth and their growth rates have been computed in several key papers (see \cite{GZ1}, \cite{GZ2}, \cite{GZ2018}, \cite{GZ2019}).
    It turns out that if $A$ satisfies a non-trivial identity, then $\lim_{n\to \infty} \sqrt[n]{c_n(A)}=exp(A)$, $\lim_{n\to \infty}  \sqrt[n]{c_n^z(A)} =exp^z(A)$
	and $\lim_{n\to \infty}  \sqrt[n]{c_n^\delta(A)} =exp^\delta(A)$ exist,
	are integers  called respectively the PI-exponent, the central exponent and the proper central exponent of $A$. Moreover all three codimensions either grow exponentially or
	are polynomially bounded.

   In the language of varieties, it's well established that the set of the $*$-polynomial identities satisfied by an algebra $A$ with involution generate a $*$-variety $\mathcal{V^*} = \text{var}(A)$ and the growth of  the $*$-codimensions of $A$ reflects the growth of the variety itself. Similarly, the $z$-growth and the $\delta$-growth of the $*$-variety $\mathcal{V^*}$ correspond to the growth of  ${c^{*,z}_n(A)}$ and  ${c^{*,\delta}_n(A)}$, respectively.

In the ordinary case of associative algebras, Giambruno and Zaicev in \cite{GZ2000} characterized the varieties of exponent greater than two.
Recently, the varieties of associative algebras and the varieties of $G$-graded algebras with proper central exponent greater than two were investigated in \cite{BV2025}, \cite{BV20252}.

In the setting of algebras with involution,
it was shown in \cite{GM2001},  \cite{GM2001comm} and \cite{MV2000} that only two $*$-varieties of $*$-algebras exhibit almost polynomial growth.
More recently
in \cite{GLP2026} the authors  classified  $*$-varieties having almost polynomial $z$-growth  or $\delta$-growth. Recall that a $*$-variety $\mathcal{V^\ast}$  is said to have almost polynomial growth (or $z$-growth or $\delta$-growth) if it exhibits exponential growth but every proper subvariety grows only polynomially. Moreover it is
minimal of $*$-exponent $d$ if $exp^\ast(A) = d$ and  the $\ast$-exponent of any proper $*$-subvariety is strictly less than
$d$.
Analogously, one defines minimal $*$-varieties of central $*$-exponent
and of proper central $*$-exponent~$d$.

The aim of this paper is to study the $*$-varieties of algebras with involution whose  proper central $*$-exponent is greater than two.  We  explicitly exhibit a family  of $*$-algebras $A_i$ in order to prove that, if a $*$-variety $\mathcal V^*$ has
proper central $*$-exponent greater than two, then $A_i \in \mathcal V^*,$ for some $i.$ Moreover we classify the minimal $*$-varieties of algebras with involution whose  proper central $*$-exponent is equal to three or four.

\section*{Preliminaries and basic results}

Throughout we shall assume that $F$ is an algebraically closed field of characteristic zero. Let $A$ be an associative $F$-algebra endowed with an involution $\,\ast\,$, that is a linear map $\ast : A \rightarrow A$ of order at most two such that
$
(ab)^\ast = b^\ast a^\ast, \, \text{for all } a, b \in A.
$
We denote the set of symmetric and skew-symmetric elements of $A$ by
$
A^+ = \{ a \in A \mid a^\ast = a \}$ and $A^- = \{ a \in A \mid a^\ast = -a \}
$
respectively, so that $A = A^+ \oplus A^-$.

Let ${F} \langle X, \ast \rangle = {F} \langle x_1, x_1^\ast, x_2, x_2^\ast, \ldots \rangle$ denote the free associative algebra with involution generated by the countable set $X = \{x_1, x_2, \ldots\}$. Define the symmetric and skew-symmetric variables
$x_i^+ = x_i + x_i^\ast  \text{and } x_i^- = x_i - x_i^\ast, \, \text{for all } i \geq 1,$
hence
${F} \langle X, \ast \rangle = {F} \langle x_1^+, x_1^-, x_2^+, x_2^-, \ldots \rangle.
$

Let $\mathrm{Id}^\ast(A)$ be the  $T^\ast$-ideal of all $\ast$-polynomial identities of $A$ and $\mathrm{Id}^{\ast,z}(A)$ be the $T^\ast$-space of central $*$-polynomial of $A.$
For each $n \geq 1$, let $P_n^\ast$ be the vector space of multilinear $\ast$-polynomials in the variables $x_1, x_1^* \ldots, x_n, x_n^*.$
Define the following codimension sequences:
$$
c_n^\ast(A) = \dim  \frac{P_n^\ast}{P_n^\ast \cap \operatorname{Id}^\ast(A)}, \quad
c_{n}^{\ast,z}(A) = \dim \frac{P_n^\ast}{P_n^\ast \cap \operatorname{Id}^{\ast,z}(A)}, \quad
c_{n}^{\ast,\delta}(A) = \dim  \frac{P_n^\ast \cap \operatorname{Id}^{\ast,z}(A)}{P_n^\ast \cap \operatorname{Id}^\ast(A)} .
$$

These sequences are called the sequence of $\ast$-codimensions,  central $\ast$-codimensions  and  proper central $\ast$-codimensions of $A$, respectively.
If $A$ is a PI-algebra  these sequences grow exponentially and it is known that the limits
\[
exp^\ast(A) = \lim_{n \to \infty} \sqrt[n]{c_n^\ast(A)}, \quad
exp^{\ast,z}(A) = \lim_{n \to \infty} \sqrt[n]{c_{n}^{\ast,z}(A)}, \quad
exp^{\ast,\delta}(A) = \lim_{n \to \infty} \sqrt[n]{c_{n}^{\ast,\delta}(A)}
\]
exist and are integers, known as the {$\ast$-exponent}, the central $\ast$-exponent and the {proper central $\ast$-exponent} of $A$ (see \cite{GPV2017}, \cite{MR2022}).

A central tool in the study of $\ast$-polynomial identities in PI-algebras is the theory of superalgebras and superinvolutions.
A superinvolution on a superalgebra $B = B^{(0)} \oplus B^{(1)}$ is a graded linear map $\sharp : B \to B$ such that:
\[
(a^\sharp)^\sharp = a, \quad (ab)^\sharp = (-1)^{|a||b|} b^\sharp a^\sharp,
\]
for all homogeneous elements $a, b \in B$.

Let $E$ denote the infinite-dimensional Grassmann algebra generated by $\{e_1, e_2, \ldots\}$ with  relations $e_i e_j = - e_j e_i$. It carries a natural $\mathbb{Z}_2$-grading: $E = E^{(0)} \oplus E^{(1)}$ and admits a superinvolution defined by $e_i^\sharp = -e_i$.
Given a superalgebra $B$ with superinvolution $\star$ its {Grassmann envelope}  is defined as:
\[
E(B) = (E^{(0)} \otimes B^{(0)}) \oplus (E^{(1)} \otimes B^{(1)}).
\]
$E(B)$ becomes an algebra with involution $\ast$ defined on homogeneous elements by:
\[
(g \otimes a)^\ast = g^\sharp \otimes a^{\star}.
\]

A  famous theorem of Kemer asserts that an arbitrary PI-algebra has the same identities as the Grassmann envelope $E(B)$ of a finite dimensional superalgebra
$B.$ This result was extended in the setting of algebras with involution in the following theorem

\medskip
\noindent
\begin{theorem}(\cite[Theorem 4]{AGK}) If $A$ is an algebra with involution satisfying a non-trivial $\ast$-identity, then there exists a finite-dimensional superalgebra with superinvolution $B$ such that
$
\operatorname{Id}^\ast(A) = \operatorname{Id}^\ast(E(B)).$
\end{theorem}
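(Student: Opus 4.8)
The plan is to adapt Kemer's structure theory for ordinary PI-algebras to the $\ast$-setting, carrying out every step $\ast$-equivariantly. Since $F$ is algebraically closed of characteristic zero, the two-element group $\mathbb{Z}_2=\{1,\ast\}$ is linearly reductive, so the Wedderburn--Malcev decompositions and the averaging constructions that underlie Kemer's argument can all be performed compatibly with the involution. I would work with the relatively free algebra $\mathcal U=F\langle X,\ast\rangle/\operatorname{Id}^\ast(A)$ and seek a finite-dimensional superalgebra with superinvolution $B$ for which $\operatorname{Id}^\ast(A)=\operatorname{Id}^\ast(E(B))$. This equality will be reduced to showing that $A$ and $E(B)$ carry the same pair of combinatorial invariants attached to their $T^\ast$-ideals, so that the entire problem becomes the construction of a finite-dimensional $\ast$-super model with prescribed invariants.

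The first ingredient is a $\ast$-version of the Kemer index and of Kemer polynomials: to $\operatorname{Id}^\ast(A)$ one attaches a pair consisting of a semisimple parameter and a nilpotency parameter, measuring, inside multialternating $\ast$-polynomials, how many disjoint alternating sets of symmetric and skew variables can survive modulo the $T^\ast$-ideal. Using the equivariant Wedderburn--Malcev decomposition $C=C_{ss}\oplus J(C)$ with both summands $\ast$-invariant and $C_{ss}$ a direct sum of $\ast$-simple blocks, together with a $\ast$-equivariant representability theorem (an involutive refinement of the Giambruno--Zaicev and Aljadeff--Belov results), one reduces the affine situation to a finite-dimensional algebra with involution $C$ over a field extension that is $\ast$-PI-equivalent to $A$. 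Retaining the $\mathbb{Z}_2$-grading that Kemer's argument produces \emph{alongside} the involution is what eventually yields the superstructure on the model.

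The final, and hardest, step is the passage to the Grassmann envelope. Here one must show that the finite-dimensional model can be chosen to be a superalgebra $B=B^{(0)}\oplus B^{(1)}$ equipped with a superinvolution $\star$ so that $(g\otimes a)^\ast=g^\sharp\otimes a^\star$, with $e_i^\sharp=-e_i$, defines an honest involution on $E(B)$ having exactly the $\ast$-identities of $A$; the sign rule $(ab)^\sharp=(-1)^{|a||b|}b^\sharp a^\sharp$ on $E$ is precisely what forces $\star$ to be a \emph{super}involution rather than an ordinary one. I expect the main obstacle to be exactly this compatibility: one has to run Kemer's finite-dimensionality and Grassmann reduction while simultaneously tracking the anti-automorphism, and then verify that the induced map on the super model satisfies $(a^\star)^\star=a$ together with the graded anti-multiplicativity. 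In characteristic zero the obstruction dissolves, because one may average any candidate anti-automorphism over $\mathbb{Z}_2$ and split it along the grading, and because a Phoenix-type argument shows that $E(B)$ inherits the same $\ast$-Kemer index and the same $\ast$-Kemer polynomials as $A$, forcing $\operatorname{Id}^\ast(E(B))=\operatorname{Id}^\ast(A)$.
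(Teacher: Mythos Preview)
The paper does not prove this theorem: it is quoted verbatim as \cite[Theorem 4]{AGK} and used as a black box, with no argument supplied. There is therefore no ``paper's own proof'' to compare your proposal against.

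That said, your outline is in the right spirit for the actual Aljadeff--Giambruno--Karasik proof: one does run a $\ast$-equivariant version of Kemer's structure theory, introducing Kemer indices and Kemer polynomials adapted to symmetric/skew variables, reducing first to a finite-dimensional model via representability, and then passing to a Grassmann envelope whose induced involution forces the finite-dimensional model to carry a superinvolution rather than an ordinary one. However, as written your proposal is a plan rather than a proof. Several of the steps you gesture at are substantial theorems in their own right: the $\ast$-equivariant representability statement, the existence and well-definedness of the $\ast$-Kemer index, the Phoenix property for $\ast$-Kemer polynomials, and especially the construction of the finite-dimensional $\ast$-super model with prescribed invariants. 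The sentence ``in characteristic zero the obstruction dissolves, because one may average any candidate anti-automorphism over $\mathbb{Z}_2$'' is too glib---the issue is not merely to produce some involution on the model but to ensure that the resulting Grassmann envelope has exactly the same $T^\ast$-ideal, and this requires the full inductive machinery of Kemer's argument carried out in the $\ast$-super setting. If your intent was to indicate that the theorem is a known consequence of the involutive Kemer theory developed in \cite{AGK}, that is correct; if your intent was to give a self-contained proof, the sketch would need to be expanded considerably.
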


\smallskip

Let us observe that if $A$ is a finite-dimensional algebra with involution, then taking it with trivial grading we have:
\[
\mathrm{Id}^\ast(E(A)) = \mathrm{Id}^\ast(E^{(0)} \otimes A) = \mathrm{Id}^\ast(A),
\]
so we may work with $A$ directly instead of its Grassmann envelope.

To compute the $\ast$-exponential growth of codimensions, one focuses on $E(B)$. Let $B$ be a finite dimensional superalgebra with superinvolution over $F$ such that
$Id^*(A) = Id^*(E(B)).$ Since $F$  is algebraically closed,  by the Wedderburn–Malcev decomposition
 (\cite{GIL}, Theorem 4.1), $B = \bar B +J$ where $\bar B= B_1 \oplus \ldots \oplus B_m, $ $B_i$ are simple superalgebra with superinvolution and $J=J^\ast$ is the Jacbson radical.
\smallskip

 At this point  let us  recall the classification of the finite-dimensional simple superalgebras with superinvolution over an algebraically closed field of characteristic zero.

We remember that \( A \) is a simple $\ast$-superalgebra if \( A^2 \neq 0 \) and \( A \) has no non-trivial $\ast$-superideals.

It is known (see \cite{GZbook}, \cite{Kemerbook}) that if \( F \) is algebraically closed of characteristic \( \neq 2 \), a finite dimensional simple superalgebra \( A \) over $F$ is isomorphic to one of the following algebras

 \begin{itemize}
     \item $M_{k,l}(F) = M_{k+l}(F) =(M_{k,l}(F))^{(0)} \oplus (M_{k,l}(F))^{(1)} $ with $k \ge 1, k \ge l\ge 0$ where
$$
(M_{k,l}(F))^{(0)} = \left\{ \begin{pmatrix} X & 0 \\ 0 & T \end{pmatrix} \, \middle| \,
X \in M_k(F), \, T \in M_l(F)  \right\},$$

$$(M_{k,l}(F))^{(1)} = \left\{\begin{pmatrix} 0 & Y \\ Z & 0 \end{pmatrix} \,\middle| \, Y \in M_{k \times l}, \, Z \in M_{l \times k}(F) \right\}
$$

\item
$Q(n) = M_n(F \oplus cF)$  with grading  \( Q(n)^{(0)} = M_n(F) \), \( Q(n)^{(1)} = c M_n(F) \) and \( c^2 = 1 \).

 \end{itemize}

 \bigskip

If \( A \) is a superalgebra, we denote by \( A^{\text{sop}} \)  the superalgebra with the same graded structure and multiplication
$
a \circ b = (-1)^{|a||b|} ba.
$
 Then \( R = A \oplus A^{\text{sop}} \) is a superalgebra  with
$
    R^{(0)} = A^{(0)} \oplus (A^{\text{sop}})^{(0)},  R^{(1)} = A^{(1)} \oplus (A^{\text{sop}})^{(1)}
$
and  exchange superinvolution  defined by
$
(a, b)^{\text{exc}} = (b, a).
$

\smallskip

 We have the following

\begin{theorem} (\cite{BTT}). Let \( A \) be a finite-dimensional simple superalgebra with superinvolution over an algebraically closed field \( F \) of characteristic \( \neq 2 \). Then \( A \) is isomorphic to one of:

\begin{itemize}

\item
\( M_{k,k}(F) \) with transpose superinvolution:
\[
\begin{pmatrix}
X & Y \\
Z & T
\end{pmatrix}^{\text{trp}} =
\begin{pmatrix}
T^t & -Y^t \\
Z^t & X^t
\end{pmatrix}.
\]

\item  \( M_{k,2s}(F) \) with orthosymplectic involution:
\[
\begin{pmatrix}
X & Y \\
Z & T
\end{pmatrix}^{\text{osp}} =
\begin{pmatrix}
I_k & 0 \\
0 & Q
\end{pmatrix}^{-1}
\begin{pmatrix}
X & -Y \\
Z & T
\end{pmatrix}^t
\begin{pmatrix}
I_k & 0 \\
0 & Q
\end{pmatrix},
\]
where \( Q = \begin{pmatrix} 0 & I_s \\ -I_s & 0 \end{pmatrix} \).

\item \( M_{k,l}(F) \oplus M_{k,l}(F)^{\text{sop}} \) with exchange superinvolution.

\item \( Q(n) \oplus Q(n)^{\text{sop}} \) with exchange superinvolution.

\end{itemize}
\end{theorem}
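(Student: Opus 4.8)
The plan is to separate the analysis according to whether $A$ is simple already as a superalgebra (forgetting the superinvolution) or only simple as a superalgebra \emph{with} superinvolution. Suppose first that $A$ carries a proper nonzero superideal $I$. Since the superinvolution $\sharp$ is a graded linear anti-automorphism, $I^\sharp$ is again a superideal, and both $I\cap I^\sharp$ and $I+I^\sharp$ are $\sharp$-invariant superideals. By $\ast$-simplicity the first is $0$ and the second is all of $A$, so $A = I \oplus I^\sharp$ as a direct sum of superideals. Using $I^\sharp I \subseteq I\cap I^\sharp = 0$ one checks that every superideal of $I$ is in fact a superideal of $A$, whence $I$ is simple as a superalgebra; moreover $a \mapsto a^\sharp$ is a graded anti-isomorphism $I \to I^\sharp$, so that $(A,\sharp) \cong (I \oplus I^{\mathrm{sop}}, \mathrm{exc})$.

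In this first case I would invoke the classification of finite-dimensional simple superalgebras recalled above, giving $I \cong M_{k,l}(F)$ or $I\cong Q(n)$. Since the exchange superinvolution exists on $B \oplus B^{\mathrm{sop}}$ for \emph{every} superalgebra $B$, no further restriction on $I$ arises, and these two possibilities yield exactly the last two items of the statement.

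It remains to treat the case where $A$ is simple as a superalgebra, so that $A \cong M_{k,l}(F)$ or $A\cong Q(n)$, and to determine the admissible superinvolutions. For $Q(n)$ I would argue directly: writing $A^{(0)} = M_n(F)$ and $A^{(1)} = cM_n(F)$ with $c$ central and $c^2 = 1$, the restriction of $\sharp$ to $A^{(0)}$ is an ordinary involution $\tau$, while applying the superinvolution axiom to even-odd products forces $(cb)^\sharp = \lambda\, c\,\tau(b)$ with $\lambda=\pm 1$. Computing $(cb_1\cdot cb_2)^\sharp$ in two ways, using the sign $(-1)^{|a||b|}=-1$ for two odd factors together with $c^2=1$, then yields $\tau(b_2)\tau(b_1) = -\tau(b_2)\tau(b_1)$ for all $b_1,b_2$, which is absurd; hence $Q(n)$ admits no superinvolution, explaining why it occurs only through the exchange case. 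For $M_{k,l}(F)$ the restriction of $\sharp$ to the even part $M_k(F)\oplus M_l(F)$ is an involution that either interchanges the two simple blocks, forcing $k=l$ and, after conjugation by a suitable homogeneous invertible element, the transpose superinvolution, or stabilizes each block. In the latter case the decisive point is that the axiom carries the sign $+1$ on even-even products but $-1$ on odd-odd products; this opposite behaviour forces the bilinear forms attached to $\tau$ on the two blocks to have opposite symmetry, one orthogonal and one symplectic. The symplectic block must then have even size, giving $l=2s$, and after normalizing the forms via Skolem–Noether one recovers precisely the orthosymplectic superinvolution.

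The routine portions are the $\ast$-simplicity dichotomy and the direct elimination of $Q(n)$. The \emph{main obstacle} is the structural analysis on $M_{k,l}(F)$: one must prove that the grading-induced sign change in the superinvolution axiom is exactly what rigidifies the pair of block involutions into the orthogonal/symplectic combination, and then normalize the defining element $g$ (satisfying $g^{\flat}=\pm g$ relative to a fixed model $\flat$) up to conjugation, so as to conclude that every superinvolution is equivalent, as a superalgebra with superinvolution, to one of the two standard models. Keeping track of the super-signs throughout this last normalization, and in particular verifying that conjugation by odd invertible elements is available precisely when $k=l$, is where the care is needed.
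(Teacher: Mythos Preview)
The paper does not prove this theorem at all: it is quoted verbatim from \cite{BTT} as a background classification result, with no argument supplied. There is therefore nothing in the paper to compare your proposal against.

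That said, your outline is the standard route to this classification and is essentially correct. The dichotomy ``simple as a superalgebra versus only $\sharp$-simple'' is the right first split, and your reduction of the second case to $(I\oplus I^{\mathrm{sop}},\mathrm{exc})$ is clean. Your elimination of $Q(n)$ is also correct once one notes that $c$ lies in the (super)center of $Q(n)$, so $c^\sharp$ is again central and odd, hence $c^\sharp=\lambda c$ with $\lambda^2=1$; this is exactly the justification your phrase ``applying the superinvolution axiom to even--odd products'' is gesturing at, and with it the contradiction $\tau(b_2)\tau(b_1)=-\tau(b_2)\tau(b_1)$ follows as you wrote. For $M_{k,l}(F)$ your description of the two subcases (the even involution swaps the blocks, forcing $k=l$ and the transpose model; or it fixes each block, and the odd--odd sign forces an orthogonal/symplectic pair, hence $l=2s$ and the orthosymplectic model) is accurate, and you are right that the delicate part is the normalization step via Skolem--Noether together with the bookkeeping of super-signs. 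If you want to turn this into a full proof rather than a sketch, that last normalization is where the work lies; the rest is routine.
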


\medskip
If $A$ is a simple superalgebra with superinvolution and the grading is trivial from the above theorem
one obtains the classification of the finite-dimensional simple algebras with involution, that is, $M_k(F)$ with transpose or symplectic
 involution or $M_k(F)\oplus M_k(F)^{op}$ with exchange involution, where $M_k(F)^{op}$ is the opposite algebra of $M_k(F).$

In order to estimate the $*$-exponent and the proper central $*$-exponent  we recall  that a semisimple superalgebra with superinvolution $C = B_{i_1} \oplus \cdots \oplus B_{i_k} \subseteq \overline{B}$,  where $i_1, \ldots,i_k \in
\{1, . . .,m \}$ are distinct, is {admissible} ifIn
$
B_{i_1} J \cdots J B_{i_k} \neq 0.$
We call $C$ centrally admissible for $E(B)$ if there exists a proper central $\ast$-polynomial $f(x_1^+, \ldots, x_r^+, x_1^-, \ldots, x_s^-)$ of $E(B)$ such that
$
f\left(
a^+_1, \dots, a^+_{k_1},\,
b^+_1, \dots, b^+_{r - k_1},\,
a^-_1, \dots, a^-_{k_2},\,
b^-_1, \dots, b^-_{s - k_2}
\right) \neq 0,
$
for some
$a^+_1 \in E(B_{i_1})^+, \dots, a^+_{k_1} \in E(B_{i_{k_1}})^+,
a^-_1 \in E(B_{i_{k_1 + 1}})^-, \dots, a^-_{k_2} \in E(B_{i_{k_1+k_2}})^-,
 b^+_1, \dots, b^+_{r - k_1} \in E(B)^+,$
 $
b^-_1, \dots, b^-_{s - k_2} \in E(B)^-$,
with $ r+s = k. $

It was proved in \cite{GPV2017} that
$exp^\ast(E(B))$
is the maximal dimension of an admissible subalgebra of $B$.
Analogously
$exp^{\ast,\delta}(E(B))$
is the maximal dimension of a centrally admissible subalgebra of $B$ (see \cite{MR2022}).

\section{Algebras with involution of small $\ast$-exponent}

In this section we will introduce some suitable algebras with involution  that will allow us to prove the main result of this paper.

Recall
that, if $F$ is an algebraically closed field of characteristic
zero, then, up to isomorphisms, all finite dimensional $\ast$-simple
are the following ones  (see \cite{Ro}, \cite{GZbook}):

\begin{itemize}
  \item $(M_n(F),t)$, the algebra of $n \times n$
matrices with the transpose involution;
  \item $(M_{2m}(F),s)$,  the algebra of $ 2m \times 2m
$ matrices with the symplectic involution;
  \item $(M_n(F)\oplus M_n(F)^{op},exc)$, the  direct
sum of the algebra of $n \times n$ matrices and the opposite
algebra with the exchange involution.
\end{itemize}

Now, we denote by $\theta_n$  an
important class of involutions on $M_{n}(F)$ of transpose type defined by
$$
e_{ij}^{\theta_n} = e_{n+1-j\,n+1-i} \,\,\, \text{for all} \,
i,j \in [1,n].
$$
The involution $\theta_n$  is called the reflection involution.
When $n = 2m$ is
an even positive integer we denote by $\sigma_{n}$ the symplectic type
involution on $M_n(F)$ defined by (see \cite{BDVV2024}):
$$e_{ij}^{\sigma_n} = \delta_{[1,m]}(i)\delta_{[1,m]}(j)e_{ij}^{\theta_n} = \delta_{[1,m]}(i)\delta_{[1,m]}(j)e_{n+1-j\,n+1-i}, \,\, \text{for all} \,
i,j \in [1,2m]$$
 where
$$\delta_{[1,m]}(i) = \left\{
\begin{array}{ll}
    1 & \hbox{if $i \in [1,m]$} \\
   -1 & \hbox{otherwise.}
\end{array}
\right.
$$

Here we recall  the following result:

\begin{proposition}\cite[Proposition 4]{BDVV2024} Let $F$ be an algebraically closed field of characteristic zero, and let $\ast$ be an involution on $M_n(F)$.
Then either $\ast$ is of transpose type and
$$
(M_n(F), \ast) \cong (M_n(F), t) \cong (M_n(F), \theta_n)
$$
or $n$ is even, $\ast$ is of symplectic type and
$$
(M_n(F), \ast) \cong (M_n(F), \sigma_n).
$$
\end{proposition}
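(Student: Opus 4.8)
The plan is to bring an arbitrary involution into the normal form $X \mapsto P X^t P^{-1}$ via the Skolem--Noether theorem, and then to read off the two cases from the symmetry type of $P$. Fix the transpose $t$ as a reference involution. Since both $\ast$ and $t$ are anti-automorphisms of $M_n(F)$, their composite $\psi(X) = (X^t)^\ast$ is an algebra \emph{automorphism}: indeed
\[
\psi(XY) = \bigl((XY)^t\bigr)^\ast = (Y^t X^t)^\ast = (X^t)^\ast (Y^t)^\ast = \psi(X)\,\psi(Y).
\]
By Skolem--Noether every automorphism of $M_n(F)$ is inner, so there is an invertible $P$ with $\psi(X) = P X P^{-1}$; replacing $X$ by $X^t$ gives $X^\ast = P\, X^t\, P^{-1}$ for all $X \in M_n(F)$.

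Next I would exploit that $\ast$ has order at most two. A direct computation gives $(X^\ast)^\ast = P (P^{-1})^t\, X\, P^t P^{-1}$, and requiring this to equal $X$ for every $X$ forces $P (P^{-1})^t$ to commute with all of $M_n(F)$, hence to be scalar: $P = \lambda P^t$ for some $\lambda \in F$. Transposing yields $P^t = \lambda P$, so $P = \lambda^2 P$ and $\lambda^2 = 1$. Thus either $P^t = P$ (the transpose/orthogonal case) or $P^t = -P$ (the symplectic case); in the latter the nondegeneracy of an alternating form forces $n$ to be even.

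The final step is to normalise $P$ by congruence, using that $F$ is algebraically closed of characteristic zero. If $P$ is symmetric it is congruent to the identity, say $P = Q Q^t$; conjugation $\phi(X) = Q^{-1} X Q$ is then an algebra automorphism with $\phi(X^\ast) = Q^t X^t (Q^t)^{-1} = \phi(X)^t$, so $(M_n(F),\ast) \cong (M_n(F),t)$. Since the reflection involution satisfies $X^{\theta_n} = W X^t W$ with $W = \sum_k e_{k,\,n+1-k}$ symmetric and invertible, the same argument gives $(M_n(F),\theta_n) \cong (M_n(F),t)$, establishing the first alternative. If instead $P$ is alternating, then $n$ is even and $P$ is congruent to the standard symplectic matrix; the analogous conjugation identifies $\ast$ with the symplectic involution, and a matrix-unit check shows that $\sigma_n$ is exactly the symplectic-type involution attached to the skew matrix $\sum_k \delta_{[1,m]}(k)\, e_{k,\,n+1-k}$, yielding $(M_n(F),\ast) \cong (M_n(F),\sigma_n)$.

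The two genuinely essential inputs are Skolem--Noether, which produces the normal form $X^\ast = PX^tP^{-1}$, and the congruence classification of bilinear forms over an algebraically closed field, which collapses all orthogonal (respectively symplectic) involutions into a single isomorphism class. The remaining work is bookkeeping; the most delicate point is verifying that conjugation by $Q$ is genuinely an isomorphism of $\ast$-algebras and that the matrices $W$ and $\sum_k \delta_{[1,m]}(k)\, e_{k,\,n+1-k}$ reproduce precisely the sign patterns defining $\theta_n$ and $\sigma_n$.
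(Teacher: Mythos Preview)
The paper does not give its own proof of this proposition: it is merely recalled from \cite{BDVV2024} and stated without argument. So there is nothing to compare against on the paper's side.

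Your argument is correct and is exactly the standard one. The Skolem--Noether step and the computation $P=\lambda P^t$, $\lambda^2=1$, are fine; over an algebraically closed field the congruence classification of nondegenerate symmetric (resp.\ alternating) forms then collapses each case to a single class, and the conjugation check $\phi(X^\ast)=\phi(X)^t$ goes through as you wrote. The identifications of $\theta_n$ and $\sigma_n$ with the transpose and symplectic types via the matrices $W=\sum_k e_{k,n+1-k}$ and $S=\sum_k \delta_{[1,m]}(k)\,e_{k,n+1-k}$ are also correct (one checks $S^t=-S$, $S^2=-I$, and $S\,e_{ij}^t\,S^{-1}=\delta_{[1,m]}(i)\delta_{[1,m]}(j)\,e_{n+1-j,\,n+1-i}$, matching the definition of $\sigma_n$). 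Nothing is missing.
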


\medskip

Let $UT_n(F)$ denote the subalgebra of $M_n(F)$ consisting of all
upper triangular matrices.

\medskip
\noindent Now we consider the following algebras:

\bigskip

\noindent
\begin{enumerate}
    \item[1)]
$N$,  the subalgebra of  $UT_6(F)$ of elements
$\begin{pmatrix}
       a &d & e & g & h & i  \\
        0 & b & f & 0 & 0& l \\
        0 & 0 & c & 0& 0& m\\
        0 & 0 & 0 & c & n & p\\
        0 & 0 & 0 & 0 & b & q\\
        0 & 0 & 0 & 0& 0 & a
    \end{pmatrix} $;

\bigskip
\bigskip
\noindent
\item[2)] $M$,  the subalgebra of  $UT_8(F)$ of elements
$\begin{pmatrix}
        0 & a & b & c & l & m & n & p \\
        0 &d &e & f & 0 & 0 & 0 & q  \\
        0 &0 & g & h &0 & 0 & 0 &r\\
        0 & 0 & 0 & i & 0 & 0 & 0 &s \\
        0 & 0 & 0 & 0&  i &t & u &v \\
        0 & 0 & 0 & 0&  0 & g & w &y \\
        0 & 0 & 0 & 0&  0 & 0 & d &z \\
        0 & 0 & 0 & 0&  0 & 0 & 0 &0
    \end{pmatrix}$;

\bigskip
\bigskip

\noindent
\item[3)] $P$, the subalgebra of  $UT_4(F)$ of elements
$\begin{pmatrix}
        a & d & e & f  \\
        0 & b &0 & g \\
        0 & 0& c & h \\
        0 & 0& 0 & a
    \end{pmatrix}$;

\bigskip
\bigskip

\noindent
\item[4)] $Q$, the subalgebra of  $UT_4(F)$ of elements
$\begin{pmatrix}
        a & d & e & f  \\
        0 & b &0 & g \\
        0 & 0& b & h \\
        0 & 0 &  0  & c
    \end{pmatrix}$;

\bigskip
\bigskip

\noindent
\item[5)] $R$, the subalgebra of  $UT_6(F)$ of elements
$\begin{pmatrix}
       0 &d & e & g & h & i  \\
        0 & a & f & 0 & 0& l \\
        0 & 0 & b & 0& 0& m\\
        0 & 0 &0 & c & n & p\\
        0 & 0 & 0 & 0 & a & q\\
        0 & 0 & 0 & 0& 0 & 0
    \end{pmatrix}.$
\end{enumerate}

\bigskip \bigskip

The following algebras with involution will be the focus of our considerations:
\bigskip

    \begin{enumerate}
                \item[ ]$\mathcal{A}_1=(M_2(F), t)$;

        \medskip
              \item[ ] $\mathcal{A}_2=(M_2(F), s)$;
        \medskip
         \item[ ] $\mathcal{A}_3=(M_{1,1}(E),\diamond )$; where
        $\begin{pmatrix}
        a & b \\
        c &d
    \end{pmatrix}^{\diamond}  = \begin{pmatrix}  d & b \\ -c &a
    \end{pmatrix}$;
\medskip
\item[ ] $\mathcal{A}_4=(E \oplus E^{op}, \textrm{exc})$;
\medskip
        \item[ ] $\mathcal{A}_5=(N,\theta_6)$;
        \medskip
        \item[ ] $\mathcal{A}_6= (N,\sigma_6)$;
        \medskip
        \item[ ] $ \mathcal{A}_7= (M,\theta_8)$;
        \medskip
        \item[ ] $\mathcal{A}_8= (M,\sigma_8)$;
        \medskip
        \item[ ]$\mathcal{A}_{9} = (P,\theta_4)$;
        \medskip
        \item[ ] $\mathcal{A}_{10}=(P,\sigma_4)$;
        \medskip
        \item[ ] $\mathcal{A}_{11}=(Q,\theta_4)$;
       \medskip
        \item[ ] $\mathcal{A}_{12}= (Q,\sigma_4)$;
        \medskip
        \item[ ] $\mathcal{A}_{13} =(R,\theta_6)$;
        \medskip
        \item[ ] $\mathcal{A}_{14} =(R,\sigma_6)$.
    \end{enumerate}

\bigskip

About their $*$-exponent and proper central $*$-exponent we have the following lemmas.

\begin{lemma} \label{exponent A1-A4}
		For $i=1,\ldots , 4$, $exp^\ast(\mathcal A_i)=exp^{\ast, \delta}(\mathcal A_i)=4$.
\end{lemma}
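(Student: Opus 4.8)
The strategy is to realise each $\mathcal A_i$ as the Grassmann envelope $E(B_i)$ of a finite-dimensional simple superalgebra with superinvolution $B_i$ with $\dim_F B_i=4$, and then to invoke the two dimension formulas recalled in the Preliminaries: by \cite{GPV2017}, $exp^\ast(E(B))$ is the maximal dimension of an admissible subalgebra of $B$, while by \cite{MR2022}, $exp^{\ast,\delta}(E(B))$ is the maximal dimension of a centrally admissible one. First I would fix the identifications. For $\mathcal A_1=(M_2(F),t)$ and $\mathcal A_2=(M_2(F),s)$ I take $B=M_2(F)$ with trivial grading, so that $\operatorname{Id}^\ast(\mathcal A_i)=\operatorname{Id}^\ast(E(B))$ by the observation following Theorem 1. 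For $\mathcal A_3$ I check that $M_{1,1}(E)=E(M_{1,1}(F))$ and that $\diamond$ is precisely the involution induced on the envelope by the transpose superinvolution of $M_{1,1}(F)$, so $B=M_{1,1}(F)$. For $\mathcal A_4$ I use the algebra isomorphism $E(Q(1))\cong E$, given by $g_0+g_1\mapsto g_0\otimes 1+g_1\otimes c$ on even and odd parts, which yields $E(Q(1)\oplus Q(1)^{\mathrm{sop}})\cong E\oplus E^{op}$ carrying the exchange superinvolution to the exchange involution, so $B=Q(1)\oplus Q(1)^{\mathrm{sop}}$. In each case $B_i$ is a simple superalgebra with superinvolution, so its Wedderburn--Malcev decomposition has $J=0$ and consists of the single block $B_i$; this block is trivially admissible, and therefore $exp^\ast(\mathcal A_i)=\dim_F B_i=4$.

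For the proper central $\ast$-exponent the upper bound is immediate: since $P_n^\ast\cap\operatorname{Id}^{\ast,z}(\mathcal A_i)\subseteq P_n^\ast$ we have $c_n^{\ast,\delta}(\mathcal A_i)\le c_n^\ast(\mathcal A_i)$ for every $n$, whence $exp^{\ast,\delta}(\mathcal A_i)\le exp^\ast(\mathcal A_i)=4$; equivalently, every centrally admissible subalgebra is admissible. It thus remains to prove $exp^{\ast,\delta}(\mathcal A_i)\ge 4$, and by \cite{MR2022} this reduces to showing that the single block $B_i$ is centrally admissible, i.e.\ that $\mathcal A_i=E(B_i)$ admits a proper central $\ast$-polynomial. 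The plan is to write down such a polynomial explicitly in each of the four cases.

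The intended witnesses are as follows. For $\mathcal A_2$ the symplectic involution gives $A^+=F\cdot I=Z(A)$, so $f=x_1^+$ is already proper central. For $\mathcal A_1$ a skew element is $\left(\begin{smallmatrix}0&c\\-c&0\end{smallmatrix}\right)$, with square $-c^2I$, so $f=(x_1^-)^2$ works. For $\mathcal A_3$ a skew element is $\left(\begin{smallmatrix}\alpha&0\\ \gamma&-\alpha\end{smallmatrix}\right)$ with $\alpha\in E^{(0)}$, $\gamma\in E^{(1)}$; since $E^{(0)}$ is central in $E$ we get $\gamma\alpha-\alpha\gamma=0$, so again $(x_1^-)^2=\alpha^2I\in E^{(0)}I=Z(M_{1,1}(E))$, nonzero for $\alpha=1$. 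For $\mathcal A_4$ the symmetric elements are the pairs $(u,u)$, and $[(u_1,u_1),(u_2,u_2)]=([u_1,u_2],-[u_1,u_2])$ lies in $E^{(0)}\oplus E^{(0)}=Z(E\oplus E^{op})$ because $[E,E]\subseteq E^{(0)}$; hence $f=[x_1^+,x_2^+]$ is central, and it is nonzero on $u_1=e_1$, $u_2=e_2$. In every case $f$ takes central values for all substitutions of the prescribed symmetry type and is nonzero for some, so $B_i$ is centrally admissible and $exp^{\ast,\delta}(\mathcal A_i)\ge 4$, giving equality. The step I would be most careful about is the pair of structural identifications for $\mathcal A_3$ and $\mathcal A_4$, together with the verification that the displayed polynomials land in the centre for \emph{every} admissible substitution rather than merely on the chosen one; once these are settled, both exponent values follow directly from \cite{GPV2017} and \cite{MR2022}.
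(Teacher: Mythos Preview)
Your argument is correct and follows essentially the same route as the paper: both proofs reduce the equality $exp^{\ast,\delta}(\mathcal A_i)=exp^\ast(\mathcal A_i)=4$ to exhibiting a single proper central $\ast$-polynomial on each $\mathcal A_i$, with the value $exp^\ast=4$ coming from simplicity of the underlying $4$-dimensional (super)algebra. The only differences are cosmetic: the paper is terser about the $E(B)$ identifications and chooses the multilinear witnesses $x_1^-x_2^-$ for $\mathcal A_1$ and $[x_1^-,x_2^-,x_1^+]$ for $\mathcal A_3$, whereas your non-multilinear $(x_1^-)^2$ works just as well (and for $\mathcal A_2$, $\mathcal A_4$ your witnesses coincide with the paper's).
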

\begin{proof}

For the $\ast$-algebra $\mathcal{A}_1=(M_2(F), t)$ it is easy to see that the polynomial $f(x_1^-,x_2^-)=x_1^-x_2^-$ is a proper central $\ast$-polynomial, then $exp^\ast(\mathcal A_1)=exp^{\ast, \delta}(\mathcal A_1)=4.$

Since $f(x_1^+)=x_1^+$ is a proper central $\ast$-polynomial of $\mathcal{A}_2=(M_2(F), s),$ then,
also for his algebra, $exp^\ast(\mathcal A_2)=exp^{\ast, \delta}(\mathcal A_2)=4.$

For the $\ast$-algebra $\mathcal{A}_3= (M_{1,1}(E),\diamond )$, the polynomial $f(x_1^+,x_1^-,x_2^-)=[x_1^-,x_2^-,x_1^+]$ is a proper central $\ast$-polynomial, hence $exp^\ast(\mathcal A_3)=exp^{\ast, \delta}(\mathcal A_3)=4.$

Finally, it is easy to see  that $f(x_1^+,x_2^+)=[x_1^+,x_2^+]$ is a proper central $\ast$-polynomial for the $\ast$-algebra $\mathcal{A}_4=(E \oplus E^{op}, \textrm{exc})$ and so $exp^\ast(\mathcal A_4)=exp^{\ast, \delta}(\mathcal A_4)=4$
 and the proof of the lemma is completed.
\end{proof}

	\begin{lemma} \label{exponent A5-A14}
		For $i=5,\ldots , 14$, $exp^\ast(\mathcal A_i)=exp^{\ast, \delta}(\mathcal A_i)=3$.
	\end{lemma}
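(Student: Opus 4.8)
The plan is to apply the two characterizations recalled in the Preliminaries: $exp^\ast(E(B))$ is the maximal dimension of an admissible subalgebra of $B$, and $exp^{\ast,\delta}(E(B))$ is the maximal dimension of a centrally admissible subalgebra. Since each $\mathcal A_i$ with $i=5,\dots,14$ is a \emph{finite-dimensional} $\ast$-algebra, I first equip it with the trivial grading, so that $\mathrm{Id}^\ast(E(\mathcal A_i))=\mathrm{Id}^\ast(\mathcal A_i)$ and both characterizations apply to $\mathcal A_i$ directly. I also record, once and for all, the elementary inequality $c_n^{\ast,\delta}(\mathcal A_i)\le c_n^{\ast}(\mathcal A_i)$, which follows from the definitions and gives $exp^{\ast,\delta}(\mathcal A_i)\le exp^\ast(\mathcal A_i)$; this reduces the proper central claim to a lower bound.

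I would first prove $exp^\ast(\mathcal A_i)=3$. For each of $N,M,P,Q,R$ I write the Wedderburn--Malcev decomposition $\mathcal A_i=\bar A\oplus J$, where $J$ is the strictly upper triangular radical and $\bar A$ is the diagonal semisimple part, spanned in every case by the three idempotents attached to the three independent diagonal parameters, so that $\dim_F\bar A=3$. Grouping the simple diagonal blocks according to the action of the involution yields the $\ast$-simple components, each either a copy of $(F,\mathrm{id})$ or a pair exchanged by the involution, i.e. $(F\oplus F,\mathrm{exc})$. Since any admissible subalgebra is a partial sum of these components, its dimension is at most $\dim_F\bar A=3$, giving $exp^\ast(\mathcal A_i)\le 3$. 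For the reverse inequality I exhibit in each algebra one ordering of all the $\ast$-simple components together with matrix units of $J$ realizing a nonzero product $B_{i_1}J\cdots JB_{i_k}\neq 0$ (for instance, in $(N,\theta_6)$ the chain $u_aJu_bJu_c\ni e_{13}\neq 0$, and analogous short radical chains for $M,P,Q,R$), showing that $\bar A$ itself is admissible, so $exp^\ast(\mathcal A_i)=3$. Note that this part is insensitive to the involution: $\theta_n$ and $\sigma_n$ act identically on the diagonal, hence produce the same components, and admissibility depends only on the underlying algebra and $J$; thus the $\theta$- and $\sigma$-versions share the same $\ast$-exponent.

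It then remains to establish $exp^{\ast,\delta}(\mathcal A_i)\ge 3$, which by the second characterization amounts to showing that the $3$-dimensional admissible subalgebra $\bar A$ is \emph{centrally} admissible. For this I construct, for each $\mathcal A_i$, an explicit multilinear proper central $\ast$-polynomial $f$ together with an evaluation that pins one symmetric or skew element into each $\ast$-simple component and sends the remaining variables to suitable radical elements, so that $f$ takes a nonzero value equal to a multiple of the central corner $e_{1n}$ (which one checks lies in $Z(\mathcal A_i)$ in every case). The shape of $f$ follows the admissibility chain used above, wrapped with commutators, with the choice of symmetric versus skew variables dictated by whether each block is fixed or exchanged by the involution, in the spirit of the polynomials appearing in Lemma~\ref{exponent A1-A4}.

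The main obstacle is precisely this construction. One must verify, for all ten algebras, that the chosen $f$ is genuinely central, i.e. its value lies in $Z(\mathcal A_i)$ for \emph{every} substitution and not merely for the distinguished one, and that $f$ is not a $\ast$-identity, all while tracking the sign changes and the different symmetric/skew decompositions produced by the reflection involutions $\theta_n$ as opposed to the symplectic-type involutions $\sigma_n$. Once such a proper central $\ast$-polynomial is produced in each case, central admissibility of $\bar A$ follows and yields $exp^{\ast,\delta}(\mathcal A_i)\ge 3$; combined with $exp^{\ast,\delta}(\mathcal A_i)\le exp^\ast(\mathcal A_i)=3$ this completes the proof.
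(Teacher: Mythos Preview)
Your strategy coincides with the paper's: it too computes the Wedderburn--Malcev decomposition, shows the full semisimple part $\bar A$ (of dimension $3$) is admissible via an explicit radical chain, and then exhibits for each $\mathcal A_i$ a concrete proper central $\ast$-polynomial (a product of commutators landing on the central corner $e_{1n}$) witnessing central admissibility of $\bar A$. The only difference is that the paper carries out the explicit constructions you describe as the ``main obstacle'', writing down the polynomials and the evaluations case by case.
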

	
	\begin{proof}

For the $\ast$-algebra $\mathcal A_5$ we have
$
F(e_{11}+e_{66})Fe_{12}F(e_{22}+e_{55})Fe_{23}F(e_{33}+e_{44})\neq 0,$
so $F(e_{11}+e_{66})\oplus F(e_{22}+e_{55})\oplus F(e_{33}+e_{44})$ is a maximal admissible subalgebra. Hence $exp^\ast(\mathcal A_5)=3$.
The center of $\mathcal A_5$ is
$
Z(\mathcal A_5)=F(e_{11}+\cdots+e_{66})+Fe_{16}.
$

Since
$
[e_{11}+e_{66},e_{12}+e_{56}][e_{22}+e_{55},e_{23}+e_{45}][e_{33}+e_{44},e_{36}+e_{14}]=e_{16},
$
the polynomial
$
[x_1^+,x_2^+][x_3^+,x_4^+][x_5^+,x_6^+]
$
is a proper central $\ast$-polynomial for $\mathcal A_5$. It follows that
$F(e_{11}+e_{66})\oplus F(e_{22}+e_{55})\oplus F(e_{33}+e_{44})$ is a maximal centrally admissible subalgebra and $exp^{\ast,\delta}(\mathcal A_5)=3$.

The same arguments apply to $\mathcal A_6$, so
$
exp^\ast(\mathcal A_6)=exp^{\ast,\delta}(\mathcal A_6)=3.
$

For both  $\mathcal A_7$ and $\mathcal A_8$ a maximal admissible subalgebra is
$
F(e_{22}+e_{77})\oplus F(e_{33}+e_{66})\oplus F(e_{44}+e_{55}),
$
hence $exp^\ast(\mathcal A_7)=exp^\ast(\mathcal A_8)=3$.
Moreover their centers coincide,
$
Z(\mathcal A_7)=Z(\mathcal A_8)=Fe_{18}.
$

For $\mathcal A_7$ the polynomial
$
[x_1^+,x_2^+][x_3^+,x_4^+][x_5^+,x_6^+][x_7^+,x_8^+]
$
is a proper central $\ast$-polynomial
since $[e_{12}+e_{78},e_{22}+e_{77}][e_{22}+e_{77},e_{23}+e_{67}][e_{33}+e_{66},e_{34}+e_{56}][e_{44}+e_{55}, e_{48}+ e_{14}]= e_{18} \in Z(\mathcal{A}_7)$; therefore $exp^{\ast,\delta}(\mathcal A_7)=3$.

For $\mathcal A_8$ the polynomial
$
[x_1^+,x_2^+][x_3^+,x_4^+][x_5^+,x_6^+][x_7^+,x_8^-]
$
is a proper central $\ast$-polynomial in fact
$[e_{12}+e_{78},e_{22}+e_{77}][e_{22}+e_{77},e_{23}+e_{67}][e_{33}+e_{66},e_{34}+e_{56}][e_{44}+e_{55}, e_{48}- e_{14}]= - e_{18} \in Z(\mathcal{A}_8)$ and so $exp^{\ast,\delta}(\mathcal A_8)=3$.

A maximal admissible subalgebra for $\mathcal A_9$ and $\mathcal A_{10}$ is
$
F(e_{11}+e_{44})\oplus (F\oplus F)(e_{22}+e_{33}),
$
then $exp^\ast(\mathcal A_9)=exp^\ast(\mathcal A_{10})=3$.
Notice that
$
F(e_{11}+e_{22}+e_{33}+e_{44})+Fe_{14}=Z(\mathcal A_9)=Z(\mathcal A_{10}).
$
The polynomial $[x_1^+,x_2^+][x_4^+,x_5^+]$ is a proper central $\ast$-polynomial for $\mathcal A_9$, and $[x_1^+,x_2^+][x_4^+,x_5^-]$ is a proper central $\ast$-polynomial for $\mathcal A_{10}$. Hence,
$
exp^{\ast,\delta}(\mathcal A_9)=exp^{\ast,\delta}(\mathcal A_{10})=3.
$

The same arguments apply to $\mathcal A_{11}$ and $\mathcal A_{12}$, giving
$
exp^\ast(\mathcal A_{11})=exp^\ast(\mathcal A_{12})=3$ and $exp^{\ast,\delta}(\mathcal A_{11}) = exp^{\ast,\delta}(\mathcal A_{12})=3.
$

Finally, a maximal admissible subalgebra for both  $\mathcal A_{13}$ and $\mathcal A_{14}$
is
$
F(e_{22}+e_{33})\oplus (F\oplus F)(e_{44}+e_{55}).
$
Moreover $Z(\mathcal A_{13})=Z(\mathcal A_{14}) =Fe_{16}$. The polynomial
$
[x_1^+,x_2^+][x_3^+,x_4^+][x_5^+,x_6^+]
$
(resp.\ $[x_1^+,x_2^+][x_3^+,x_4^+][x_5^+,x_6^-]$) is a proper central $\ast$-polynomial for $\mathcal A_{13}$ (resp.\ $\mathcal A_{14}$). Therefore
$
exp^\ast(\mathcal A_{13})=exp^\ast(\mathcal A_{14})=3$ and
$
exp^{\ast,\delta}(\mathcal A_{13})=exp^{\ast,\delta}(\mathcal A_{14})=3,
$
which completes the proof.
\end{proof}

\bigskip
 \section{The main results}
    The aim of this section is to investigate varieties of $\ast$-algebras whose proper central $\ast$-exponent exceeds two. To achieve this, we first establish several auxiliary lemmas.

	\begin{lemma} \label{lemmaA_5,A_6}
    Let $A=A^{+} \oplus A^{-}$ be a finite-dimensional $\ast$-algebra. \black
		If $A$ contains three orthogonal symmetric idempotents $e_1,e_2,e_3$ such that
		$$e_1j_1e_2j_2e_3j_3e_1\ne 0,$$
        for some $j_1,j_2,j_3\in A$, then  either $Id^\ast(A)\subseteq Id^\ast(\mathcal{A}_5)$ or $Id^\ast(A)\subseteq Id^\ast(\mathcal{A}_6)$.
	\end{lemma}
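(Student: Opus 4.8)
The plan is to deduce the inclusion from the equivalence $\operatorname{Id}^\ast(A)\subseteq\operatorname{Id}^\ast(\mathcal A_i)$ iff $\mathcal A_i\in\operatorname{var}^\ast(A)$, and then to realize $\mathcal A_5$ (or $\mathcal A_6$) as a $\ast$-homomorphic image of a $\ast$-subalgebra of $A$. First I would set $u=e_1j_1e_2$, $v=e_2j_2e_3$, $w=e_3j_3e_1$, so that $z:=uvw=e_1j_1e_2j_2e_3j_3e_1\neq 0$ lies in $e_1Ae_1$; I would work in the setting in which $u,v,w$ lie in the Jacobson radical $J$ (automatic in the intended application, where the symmetric idempotents $e_1,e_2,e_3$ sit in distinct simple components of the semisimple part $\bar A$, whence $z\in J^3$). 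Since each $e_i$ is symmetric, $\ast$ carries $e_iAe_j$ onto $e_jAe_i$; in particular $u^\ast\in e_2Ae_1$, $v^\ast\in e_3Ae_2$, $w^\ast\in e_1Ae_3$ and $z^\ast=w^\ast v^\ast u^\ast\in e_1Ae_1$.

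The choice between $\mathcal A_5$ and $\mathcal A_6$ is governed by the behaviour of $z$ under $\ast$. A direct computation gives $e_{16}^{\theta_6}=e_{16}$ and $e_{16}^{\sigma_6}=-e_{16}$, so the socle $Fe_{16}=Z(\mathcal A_5)\cap J=Z(\mathcal A_6)\cap J$ is symmetric in $\mathcal A_5$ but skew in $\mathcal A_6$. As $z\neq 0$, at least one of the symmetric element $z+z^\ast$ and the skew element $z-z^\ast$ is nonzero, and I would aim at $\mathcal A_5$ in the former case (Case A) and at $\mathcal A_6$ in the latter (Case B).

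Next I would define a map $\phi$ on the $\ast$-subalgebra $U=\langle e_1,e_2,e_3,u,v,w\rangle$ of $A$ by
\[
\phi(e_1)=e_{11}+e_{66},\quad \phi(e_2)=e_{22}+e_{55},\quad \phi(e_3)=e_{33}+e_{44},
\]
\[
\phi(u)=e_{12},\quad \phi(v)=e_{23},\quad \phi(w)=e_{36},
\]
extended so as to commute with the chosen involution $\theta_6$ (Case A) or $\sigma_6$ (Case B); this forces $\phi(u^\ast)=e_{56}$, $\phi(v^\ast)=e_{45}$, $\phi(z)=e_{12}e_{23}e_{36}=e_{16}$, while $\phi(w^\ast)=e_{14}$ in Case A and $\phi(w^\ast)=-e_{14}$ in Case B, so that $\phi(z^\ast)=\pm e_{16}$ matches the sign of $z^\ast$ prescribed above. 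A short computation shows these images generate all of $N$ (e.g. $e_{13}=e_{12}e_{23}$, $e_{15}=e_{14}e_{45}$, $e_{26}=e_{23}e_{36}$, $e_{46}=e_{45}e_{56}$, $e_{16}=e_{12}e_{23}e_{36}$), so $\phi$ is onto; once it is shown to be a well-defined surjective $\ast$-homomorphism, $U/\ker\phi\cong\mathcal A_5$ (resp. $\mathcal A_6$) yields the inclusion.

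The main obstacle is exactly the well-definedness of $\phi$: besides the path products, $U$ contains words such as $uu^\ast$, $u^\ast u$, $wu$ and longer mixed monomials that vanish in $N$ but need not vanish in $A$, and one must force them into $\ker\phi$ while keeping the relevant combination of $z$ and $z^\ast$ alive. I would control this by filtering $U$ through the radical powers $J\supseteq J^2\supseteq J^3$: since $u,v,w\in J$, the only monomials reaching the top layer along the cycle $e_1\to e_2\to e_3\to e_1$ are $z=uvw$ and its mirror $w^\ast v^\ast u^\ast$, whereas every extraneous product either dies by orthogonality of the idempotents (for instance $uw\in e_1Ae_2\cdot e_3Ae_1=0$) or lands in a strictly lower or off-path component. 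Collecting all such products, together with $z-z^\ast$ in Case A (resp. $z+z^\ast$ in Case B), into a $\ast$-ideal $I$, and verifying that $I$ does not meet the surviving one-dimensional space $F(z+z^\ast)$ (resp. $F(z-z^\ast)$), gives $U/I\cong\mathcal A_5$ (resp. $\mathcal A_6$). Checking that this graded bookkeeping really leaves the socle class nonzero is the delicate heart of the argument.
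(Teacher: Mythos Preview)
Your overall strategy is exactly the paper's: take the $\ast$-subalgebra $B$ generated by $e_1,e_2,e_3,u,v,w$ (and their $\ast$-images), kill the unwanted products by a $\ast$-ideal $I$, and exhibit an explicit $\ast$-isomorphism $B/I\to N$ with $\theta_6$ or $\sigma_6$ according to the sign of $z^\ast$. The case split you propose (whether $z+z^\ast$ or $z-z^\ast$ survives) is equivalent to the paper's split (whether $z$ and $z^\ast$ are linearly dependent, with $z^\ast=\pm z$, or independent, in which case one adjoins $z\mp z^\ast$ to $I$).

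Two points where your write-up diverges from the paper and should be adjusted. First, the assumption $u,v,w\in J$ is neither needed nor granted by the lemma: the statement is for an arbitrary finite-dimensional $\ast$-algebra with three orthogonal symmetric idempotents, and nothing forces the $j_i$ into the radical. The paper never mentions $J$; the Peirce bigrading by $e_1,e_2,e_3$ (and $1-e_1-e_2-e_3$) already gives all the orthogonality you use. Second, the ``delicate heart'' you leave open is precisely what the paper handles by brute explicitness rather than by a filtration argument: it lists the seven generators of $I$ (namely $wu,\,uu^\ast,\,u^\ast u,\,vv^\ast,\,v^\ast v,\,ww^\ast,\,w^\ast w$ in your notation), writes down the fourteen monomials that span $B$ modulo $I$, and then defines $\varphi$ on each of them. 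Since $\varphi(z)=e_{16}\neq0$, the socle automatically survives; linear independence of the fourteen cosets follows a posteriori from the fact that their images are linearly independent in $N$ and $\dim N=14$. Replacing your radical-filtration sketch with this explicit basis-and-map computation closes the argument.
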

\begin{proof} Without loss of generality we may assume that the elements $j_1, j_2,j_3\in A^{+} \cup A^{-}.$
		Let $B$ be the $\ast$-subalgebra of $A$ generated by the elements
		$$
		e_1,  e_2,  e_3, e_1j_1e_2, e_2j_2e_3, e_3j_3e_1.
		$$
First suppose that the elements $e_1j_1e_2j_2e_3j_3e_1$ and $(e_1j_1e_2j_2e_3j_3e_1)^\ast$ are linearly dependent over $F$ and suppose $(e_1j_1e_2j_2e_3j_3e_1)^\ast=\alpha e_1j_1e_2j_2e_3j_3e_1$, for some $\alpha \in F.$ Since the involution $\ast$ has order two, we have that $\alpha= \pm 1.$

Let $I$ be the $\ast$-ideal of $B$ generated  by the elements
		$$
		e_3j_3e_1j_1e_2, e_1j_1e_2j_1^\ast e_1, e_2j_1^\ast e_1j_1e_2, e_2j_2e_3j_2^\ast e_2, e_3j_2^\ast e_2j_2e_3,
        e_3j_3 e_1j_3^\ast e_3,  e_1j_3^\ast e_3j_3 e_1.
		$$
Then the $\ast$-algebra $B/I$ is finite dimensional and the following elements
$$
e_1,  e_2,  e_3, e_1j_1e_2, e_2j_1^\ast e_1, e_2j_2e_3, e_3j_2^\ast e_2, e_3j_3e_1, e_1j_3^\ast e_3,
$$
$$
e_1j_1e_2j_2e_3, e_3j_2^\ast e_2j_1^\ast e_1,
e_2j_2e_3j_3e_1, e_1j_3^\ast e_3j_2^\ast e_2,
e_1j_1e_2j_2e_3j_3e_1
$$
are  linearly independent and form a basis of $B$ mod $I$.
By abuse of notation we identify these representatives with the corresponding cosets.

We build a linear map $\varphi : B/I \rightarrow N$  by setting
$$\varphi(e_1)=e_{11}+e_{66}, \varphi(e_2)=e_{22}+e_{55}, \varphi(e_3)=e_{33}+e_{44},$$
$$\varphi(e_1j_1e_2)=e_{12}, \varphi(e_2j_1^\ast e_1)=e_{56},
\varphi(e_2j_2e_3)=e_{23}, \varphi(e_3j_2^\ast e_2)=e_{45},
$$
$$
\varphi(e_3j_3e_1)=e_{36}, \varphi(e_1j_3^\ast e_3)=\alpha e_{14},
\varphi(e_1j_1e_2j_2e_3)=e_{13}, \varphi(e_3j_2^\ast e_2j_1^\ast e_1)=e_{46},
$$
$$
\varphi(e_2j_2e_3j_3e_1)=e_{26}, \varphi(e_1j_3^\ast e_3j_2^\ast e_2)=\alpha e_{15},
\varphi(e_1j_1e_2j_2e_3j_3e_1)=e_{16}.
$$

This map $\varphi$ extends to an isomorphism of $\ast$-algebras
		and $B/I\simeq\mathcal{A}_5$ in case $\alpha=1$ and  $B/I\simeq\mathcal{A}_6$ in case $\alpha=-1$.
It follows that either  $Id^\ast(\mathcal{A}_5) \supseteq  Id^\ast(A)$ or $Id^\ast(\mathcal{A}_6) \supseteq  Id^\ast(A)$.

Now, we suppose that the elements $e_1j_1e_2j_2e_3j_3e_1$ and $(e_1j_1e_2j_2e_3j_3e_1)^\ast$ are linearly independent over $F$.
In this case if  we add respectively
\[
e_1 j_1 e_2 j_2 e_3 j_3 e_1 - (e_1 j_1 e_2 j_2 e_3 j_3 e_1)^\ast
\quad \text{or} \quad
e_1 j_1 e_2 j_2 e_3 j_3 e_1 + (e_1 j_1 e_2 j_2 e_3 j_3 e_1)^\ast
\]
to the generators of $I$; in the first case we obtain $B/I\simeq\mathcal{A}_5$, by choosing $\alpha =1$, and in the second case $B/I\simeq\mathcal{A}_6$ by choosing $\alpha =-1$.
\end{proof}

\bigskip

\begin{lemma} \label{lemmaA_7,A_8}
  Let $A=A^{+} \oplus A^{-}$ be a finite-dimensional $\ast$-algebra.
		If $A$ contains three orthogonal symmetric idempotents $e_1,e_2,e_3$ such that
		$$j_1e_1j_2e_2j_3e_3j_4\ne 0,$$
        for some $j_1,j_2,j_3, j_4\in A$, then  either $Id^\ast(A)\subseteq Id^\ast(\mathcal{A}_7)$ or $Id^\ast(A)\subseteq Id^\ast(\mathcal{A}_8)$.
	\end{lemma}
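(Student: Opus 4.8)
The plan is to mirror the structure of the proof of Lemma~\ref{lemmaA_5,A_6}, adapting it to the new configuration where the nonzero product $j_1e_1j_2e_2j_3e_3j_4$ has symmetric idempotents in the \emph{interior} rather than at the endpoints. As before, I would first reduce to the case $j_1,j_2,j_3,j_4 \in A^+ \cup A^-$ by multilinearity, replacing each $j_i$ by a homogeneous component for which the product remains nonzero. I would then let $B$ be the $\ast$-subalgebra of $A$ generated by the orthogonal symmetric idempotents $e_1,e_2,e_3$ together with the relevant ``bridging'' elements $j_1e_1$, $e_1j_2e_2$, $e_2j_3e_3$, $e_3j_4$ (and whatever additional generators are needed to make the subalgebra closed under $\ast$). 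The target is the $8$-dimensional-block algebra $M \subseteq UT_8(F)$, whose shape suggests the idempotents should map to $e_{22}+e_{77}$, $e_{33}+e_{66}$, $e_{44}+e_{55}$, matching the maximal admissible subalgebra identified in Lemma~\ref{exponent A5-A14}.

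The core step is to pass to a suitable finite-dimensional quotient $B/I$ and build an explicit isomorphism onto $\mathcal{A}_7$ or $\mathcal{A}_8$. I would define $I$ to be the $\ast$-ideal generated by the ``unwanted'' products—those that would force extra relations or collapse the desired basis—exactly analogous to the generators $e_3j_3e_1j_1e_2,\ e_1j_1e_2j_1^\ast e_1,\ \dots$ appearing in the previous lemma, but now adapted to the open path $j_1e_1\cdots e_3j_4$ rather than a closed cycle. After quotienting, I expect a basis of $B/I$ consisting of the idempotents, the single-step bridges and their $\ast$-images, the composable two- and three-step products, and finally the full product $j_1e_1j_2e_2j_3e_3j_4$, which should land in the center (the slot $e_{18}$, since $Z(\mathcal A_7)=Z(\mathcal A_8)=Fe_{18}$). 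I would then write down the linear map $\varphi$ sending each basis element to the corresponding matrix unit in $M$, with a sign parameter $\alpha=\pm 1$ inserted on the $\ast$-image slots to record the action of the involution on the long product, just as $\alpha$ was used before.

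The split into two cases proceeds exactly as in Lemma~\ref{lemmaA_5,A_6}: either $j_1e_1j_2e_2j_3e_3j_4$ and its $\ast$-image are linearly dependent, forcing $\alpha=\pm1$ and yielding $B/I \cong \mathcal A_7$ ($\alpha=1$, reflection type $\theta_8$) or $B/I \cong \mathcal A_8$ ($\alpha=-1$, symplectic type $\sigma_8$); or they are linearly independent, in which case one adjoins the symmetric or skew combination $j_1e_1j_2e_2j_3e_3j_4 \pm (j_1e_1j_2e_2j_3e_3j_4)^\ast$ to the generators of $I$ to land in either case. Verifying $Id^\ast(A) \subseteq Id^\ast(B/I)$ is immediate since $B/I$ is a $\ast$-homomorphic image of a $\ast$-subalgebra of $A$.

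The main obstacle I anticipate is getting the ideal $I$ and the resulting basis exactly right, so that $\varphi$ is genuinely an isomorphism of $\ast$-algebras. Two things must be checked carefully: that $\varphi$ respects multiplication (the composable bridge products must multiply to the longer ones, and non-composable products must already lie in $I$), and that $\varphi$ intertwines the involutions, i.e.\ $\varphi(b^\ast) = \varphi(b)^{\theta_8}$ (resp.\ $\sigma_8$) for every basis element—this is where the sign $\alpha$ and the precise symmetric/skew structure of $M$ under $\theta_8$ and $\sigma_8$ enter, and where the endpoint-versus-interior difference from the previous lemma changes which matrix units carry signs. Once the $\ast$-structure of $M$ under $\theta_8$ and $\sigma_8$ is written out on the relevant units, matching it to the behavior of $\varphi$ on $\{e_i, j_1e_1,\dots\}$ and their $\ast$-images should close the argument.
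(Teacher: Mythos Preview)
Your proposal is correct and follows essentially the same approach as the paper: the paper also takes $B$ to be the $\ast$-subalgebra generated by $e_1,e_2,e_3,\,j_1e_1,\,e_1j_2e_2,\,e_2j_3e_3,\,e_3j_4$, quotients by a $\ast$-ideal $I$ of the ``unwanted'' products, writes down the same basis and the explicit map $\psi:B/I\to M$ sending the idempotents to $e_{22}+e_{77},\,e_{33}+e_{66},\,e_{44}+e_{55}$ with a sign $\alpha$ on the $j_4^\ast$-slots, and handles the linearly independent case by adjoining $j_1e_1j_2e_2j_3e_3j_4\mp(j_1e_1j_2e_2j_3e_3j_4)^\ast$ to $I$. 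The only work you have left is exactly what you flagged: listing the generators of $I$ and checking that $\psi$ is a $\ast$-algebra isomorphism.
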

\begin{proof}
As in the previous lemma, we assume that the elements   $j_1, j_2,j_3\in A^{+} \cup A^{-}.$
Let  $B$ be the $\ast$-subalgebra of $A$ generated by the elements
		$$
		e_1,  e_2,  e_3, j_1e_1, e_1j_2e_2, e_2j_3e_3, e_3j_4.
		$$
Suppose first  that the elements $j_1e_1j_2e_2j_3e_3j_4$ and $(j_1e_1j_2e_2j_3e_3j_4)^\ast$ are linearly dependent over $F$, then $(j_1e_1j_2e_2j_3e_3j_4)^\ast=\alpha j_1e_1j_2e_2j_3e_3j_4$, with $\alpha= \pm 1.$

Now, we consider $I$ the $\ast$-ideal of $B$
		generated by the elements
		$$
		e_3j_4e_1, e_3j_4e_2, e_3j_4e_3, e_3j_4j_1e_1, e_3j_1e_1, e_2j_1e_1, e_1j_1e_1, j_1e_1j_1^\ast,
        j_4^\ast e_3 j_4,
        $$
        $$
        e_1j_1^\ast j_1e_1, e_3j_4 j_4^\ast e_3, e_1j_2e_2j_2^\ast e_1,
        e_2j_2^\ast e_1j_2e_2, e_2j_3e_3j_3^\ast e_2,
        e_3j_3^\ast e_2j_3 e_3.	$$

\noindent
Then $B/I$ is finite dimensional and the elements
$$
e_1,  e_2,  e_3, j_1e_1,  e_1j_1^\ast , e_1j_2e_2, e_2j_2^\ast e_1, e_2j_3e_3, e_3j_3^\ast e_2, e_3j_4, j_4^\ast e_3,
$$
$$
j_1e_1j_2e_2, e_2j_2^\ast e_1j_1^\ast,
e_1j_2e_2j_3e_3, e_3j_3^\ast e_2j_2^\ast e_1,
e_2j_3e_3j_4, j_4^\ast e_3j_3^\ast e_2,
$$
$$
j_1e_1j_2e_2j_3e_3, e_3j_3^\ast e_2j_2^\ast e_1 j_1^\ast,
e_1j_2e_2j_3e_3j_4, j_4^\ast e_3j_3^\ast e_2 j_2^\ast e_1,
j_1e_1j_2e_2j_3e_3j_4
$$
are  linearly independent and span $B$ mod $I$.
By abuse of notation we identify these representatives with the corresponding cosets.

We build a linear map $\psi : B/I \rightarrow M$  by setting
$$
\psi(e_1)=e_{22}+e_{77}, \psi(e_2)=e_{33}+e_{66}, \psi(e_3)=e_{44}+e_{55},
$$
$$
\psi(j_1e_1)=e_{12}, \psi(e_1j_1^\ast)=e_{78},
\psi(e_1j_2e_2)=e_{23}, \psi(e_2j_2^\ast e_1)=e_{67},
$$
$$
\psi(e_2j_3e_3)=e_{34}, \psi(e_3j_3^\ast e_2)=e_{56},
\psi(e_3j_4)=e_{48}, \psi(j_4^\ast e_3)=\alpha e_{15},
$$
$$
\psi(j_1e_1j_2e_2)=e_{13}, \psi(e_2j_2^\ast e_1j_1^\ast)=e_{68},
\psi(e_1j_2e_2j_3e_3)=e_{24}, \psi(e_3j_3^\ast e_2j_2^\ast e_1)=e_{57},
$$
$$
\psi(e_2j_3e_3j_4)=e_{38},\psi(j_4^\ast e_3j_3^\ast e_2)=\alpha e_{16},
\psi(j_1e_1j_2e_2j_3e_3)=e_{14}, \psi(e_3j_3^\ast e_2j_2^\ast e_1j_1^\ast)=e_{58},
$$
$$
\psi(e_1j_2e_2j_3e_3j_4)=e_{28}, \psi(j_4^\ast e_3j_3^\ast e_2j_2^\ast e_1)=\alpha e_{17},
\psi(j_1e_1j_2e_2j_3e_3j_4)= e_{18}.
$$

\smallskip
This map extends to an isomorphism of $\ast$-algebras, yielding $B/I\simeq\mathcal{A}_7$ if $\alpha=1$ or  $B/I\simeq\mathcal{A}_8$ if $\alpha=-1$.
Hence either  $Id^\ast(\mathcal{A}_7) \supseteq  Id^\ast(A)$ or $Id^\ast(\mathcal{A}_8) \supseteq  Id^\ast(A)$.

\smallskip

If instead $j_1 e_1 j_2 e_2 j_3 e_3 j_4$ and $(j_1e_1j_2e_2j_3e_3j_4)^\ast$ are linearly independent, then by adding  $e_1j_1e_2j_2e_3j_3e_1-(e_1j_1e_2j_2e_3j_3e_1)^\ast$ or $e_1j_1e_2j_2e_3j_3e_1+(e_1j_1e_2j_2e_3j_3e_1)^\ast$ to the generators of $I$, we obtain
$
B/I \simeq
\mathcal{A}_7$ {if } $\alpha=1,$ or
$ B/I \simeq \mathcal{A}_8$ \text{if } $\alpha=-1$ respectively.

This completes the proof.
\end{proof}	

\bigskip

\begin{lemma} \label{lemmaA_9,A_{10}}
 Let $A=A^{+} \oplus A^{-}$ be a finite-dimensional $\ast$-algebra.
		If $A$ contains two orthogonal symmetric idempotents $e_1,e_2$, with $e_1 \in F$ and $e_2\in (F\oplus F, exc)$, such that
		$$
        e_1 j_1 e_2 j_2 e_1 \ne 0
        $$
        for some $j_1,j_2\in A$, then  either $Id^\ast(A)\subseteq Id^\ast(\mathcal{A}_9)$ or $Id^\ast(A)\subseteq Id^\ast(\mathcal{A}_{10})$.
	\end{lemma}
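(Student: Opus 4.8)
The plan is to follow the same template used in Lemmas~\ref{lemmaA_5,A_6} and~\ref{lemmaA_7,A_8}, constructing an explicit finite-dimensional quotient $\ast$-algebra isomorphic to either $\mathcal A_9$ or $\mathcal A_{10}$, and then reading off the inclusion of $T^\ast$-ideals from the isomorphism. First I would reduce to the case $j_1,j_2\in A^+\cup A^-$, and let $B$ be the $\ast$-subalgebra of $A$ generated by $e_1,e_2,e_1j_1e_2,e_2j_2e_1$ together with whatever elements are needed to realize the action of the exchange-type idempotent $e_2\in(F\oplus F,\mathrm{exc})$. Since $e_2$ lives in a two-dimensional commutative $\ast$-simple piece $(F\oplus F,\mathrm{exc})$, I expect $B$ to carry, in addition to $e_1$ and $e_2$, the second central-type generator of that $F\oplus F$ factor; this is exactly the feature that distinguishes this lemma from Lemma~\ref{lemmaA_5,A_6}, where the analogous idempotents sat in copies of $F$.

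Next I would split into two cases according to whether $e_1j_1e_2j_2e_1$ and its $\ast$-image are linearly dependent or independent over $F$. In the dependent case I write $(e_1j_1e_2j_2e_1)^\ast=\alpha\,e_1j_1e_2j_2e_1$ with $\alpha=\pm1$, and define the $\ast$-ideal $I$ of $B$ generated by the obvious ``non-returning'' products (those of the form $e_2j_2e_1j_1e_2$, $e_1j_1e_2j_1^\ast e_1$, $e_2j_1^\ast e_1j_1e_2$, and their mirror images), chosen so that the quotient $B/I$ has precisely the dimension of $P$. I would then list a basis of $B/I$ by the surviving reduced monomials $e_1,\,e_2,\,e_1j_1e_2,\,e_2j_2e_1$, together with the extra basis element coming from the $F\oplus F$ structure on $e_2$, matching $\dim P$. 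The independent case is handled as before, by adjoining $e_1j_1e_2j_2e_1\mp(e_1j_1e_2j_2e_1)^\ast$ to the generators of $I$ to force $\alpha=1$ or $\alpha=-1$.

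The isomorphism itself is a linear map $\varphi:B/I\to P$ sending $e_1\mapsto e_{11}+e_{44}$, sending $e_2$ (and its $F\oplus F$ companion) to the appropriate combinations supported on $e_{22},e_{33}$, sending $e_1j_1e_2\mapsto e_{12}$ or $e_{13}$, $e_2j_2e_1\mapsto e_{24}$ or $e_{34}$, and $e_1j_1e_2j_2e_1\mapsto e_{14}$ up to the sign $\alpha$; one then checks $\varphi$ respects products and intertwines the involution with $\theta_4$ when $\alpha=1$ and with $\sigma_4$ when $\alpha=-1$, so that $B/I\cong\mathcal A_9$ or $B/I\cong\mathcal A_{10}$. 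Since $B/I$ is a $\ast$-homomorphic image of a $\ast$-subalgebra of $A$, we get $\mathrm{Id}^\ast(A)\subseteq\mathrm{Id}^\ast(B/I)$, giving the claimed inclusion into $\mathrm{Id}^\ast(\mathcal A_9)$ or $\mathrm{Id}^\ast(\mathcal A_{10})$.

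The main obstacle I anticipate is bookkeeping the role of the exchange idempotent $e_2\in(F\oplus F,\mathrm{exc})$: unlike the symmetric idempotents in the earlier lemmas, $e_2$ comes with a partner under $\ast$, so the reduced monomials and the ideal $I$ must be set up so that the two-dimensional $F\oplus F$ piece lands correctly on the $(F\oplus F)(e_{22}+e_{33})$ block of $P$, and so that $\varphi$ remains a $\ast$-map. Verifying that the listed elements are genuinely linearly independent modulo $I$ (rather than collapsing because of this extra structure) is the delicate point; once the dimensions are matched and the multiplication table checks out, the rest is routine.
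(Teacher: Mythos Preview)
Your plan is exactly the paper's proof. The only points left vague in your sketch are made explicit there: the extra generator is the skew element $e_2^-=(1,-1)e_2$ (recall $e_2$ is itself symmetric, so the ``partner'' is the skew part of $F\oplus F$, not a $\ast$-image); the ideal $I$ must contain, in addition to the non-returning products, the relations $e_1j_1e_2-e_1j_1e_2^-$ and $e_2j_2e_1-e_2^-j_2e_1$, which select one primitive idempotent of $F\oplus F$ and are precisely what makes the map to $P$ land in the right column; and the basis of $B/I$ has eight elements $e_1,e_2,e_2^-,e_1j_1e_2,e_2j_1^\ast e_1,e_2j_2e_1,e_1j_2^\ast e_2,e_1j_1e_2j_2e_1$ (your list omitted the $\ast$-images and the top product), with $\theta(e_2^-)=e_{22}-e_{33}$ and $\theta(e_1j_2^\ast e_2)=\alpha e_{13}$ carrying the sign.
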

\begin{proof}
Let $B$ be the $\ast$-subalgebra of $A$ generated by the elements
		$$
		e_1,  e_2, e_2^-, e_1j_1e_2, e_2j_2e_1,
		$$
where $e_2^-= (1,-1)e_2.$
First suppose that the elements $e_1j_1e_2 j_2e_1$ and $(e_1j_1e_2 j_2e_1)^\ast$ are linearly dependent over $F$ and let $(e_1j_1e_2 j_2e_1)^\ast=\alpha e_1j_1e_2 j_2e_1$, for some $\alpha = \pm 1.$
Now, we consider $I$ the $\ast$-ideal of $B$
		generated by the elements
		$$
		e_2j_2e_1j_1e_2, e_2j_1^\ast e_1j_1e_2,
        e_1j_2^\ast e_2j_2e_1, e_1j_1e_2j_1^\ast e_1,
        e_2j_2 e_1j_2^\ast e_2,
         e_1j_1e_2-e_1j_1e_2^-,
        e_2j_2e_1-e_2^-j_2e_1.
		$$

\noindent
The $\ast$-algebra $B/I$ is finite dimensional and the following elements
$$
e_1,  e_2,  e_2^-,  e_1j_1e_2, e_2j_1^\ast e_1, e_2j_2e_1, e_1j_2^\ast e_2,
e_1j_1e_2j_2e_1
$$
are  linearly independent and form a basis of $B$ mod $I$.
As in the previous lemmas, we identify these representatives with their cosets.

Define a linear map $\theta : B/I \rightarrow P$  by setting

$$
\theta(e_1)=e_{11}+e_{44}, \theta(e_2)=e_{22}+e_{33}, \theta(e_2^-)=e_{22}-e_{33}, \theta(e_1j_1e_2)=e_{12},
$$
$$
 \theta(e_2j_1^\ast e_1)=e_{34},
\theta(e_2j_2e_1)=e_{24}, \theta(e_1j_2^\ast e_2)=\alpha e_{13},
\theta(e_1j_1e_2j_2e_1)=e_{14}.
$$

\bigskip

The map $\theta$ extends to an isomorphism of $\ast$-algebras
		and $B/I\simeq\mathcal{A}_9$ in case $\alpha=1$ and  $B/I\simeq\mathcal{A}_{10}$ in case $\alpha=-1$.
It follows that either  $Id^\ast(\mathcal{A}_9) \supseteq  Id^\ast(A)$ or $Id^\ast(\mathcal{A}_{10}) \supseteq  Id^\ast(A)$.

Now, we suppose that the elements $e_1j_1e_2 j_2e_1$ and $(e_1j_1e_2 j_2e_1)^\ast$ are linearly independent over $F$. In this case  we can add
\[
e_1 j_1 e_2 j_2 e_1 - (e_1 j_1 e_2 j_2 e_1)^\ast
\quad \text{or} \quad
e_1 j_1 e_2 j_2 e_1 + (e_1 j_1 e_2 j_2 e_1)^\ast
\]
to the generators of $I$, yielding
$B/I \simeq \mathcal{A}_9,$ by choosing  $\alpha =1,$ or $B/I \simeq\mathcal{A}_{10}$, by choosing $\alpha =-1,$ respectively.

The proof is now completed.
\end{proof}	

\begin{lemma} \label{lemmaA_{11},A_{12}}
 Let $A=A^{+} \oplus A^{-}$ be a finite-dimensional $\ast$-algebra.
		If $A$ contains two orthogonal symmetric idempotents $e_1,e_2$, with $e_1 \in F$ and $e_2\in (F\oplus F, exc)$, such that
		$$
        e_2 j_1 e_1 j_2 e_2 \ne 0
        $$
        for some $j_1,j_2\in A$, then  either $Id^\ast(A)\subseteq Id^\ast(\mathcal{A}_{11})$ or $Id^\ast(A)\subseteq Id^\ast(\mathcal{A}_{12})$.
	\end{lemma}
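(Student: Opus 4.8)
The plan is to follow the same template used in the previous lemmas, producing an explicit finite-dimensional quotient of a suitable $\ast$-subalgebra of $A$ that is isomorphic to either $\mathcal A_{11}=(Q,\theta_4)$ or $\mathcal A_{12}=(Q,\sigma_4)$. The first step is to locate, inside $Q$, the two idempotents matching the hypotheses. Since the $(2,2)$ and $(3,3)$ diagonal entries of $Q$ are forced to be equal, $F(e_{22}+e_{33})$ is a one-dimensional $\theta_4$-symmetric simple component and plays the role of $e_1\in F$; on the other hand $Fe_{11}\oplus Fe_{44}$ is two-dimensional and $\theta_4$ interchanges $e_{11}\leftrightarrow e_{44}$, so $F(e_{11}+e_{44})$ together with $e_{11}-e_{44}$ realizes $(F\oplus F,\mathrm{exc})$ and plays the role of $e_2$. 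The word $e_2j_1e_1j_2e_2$ then corresponds to the path $1\to\{2,3\}\to 4$, that is, to the central corner entry $e_{14}$.

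Concretely, I would set $e_2^-=(1,-1)e_2$ and let $B$ be the $\ast$-subalgebra of $A$ generated by
$$
e_1,\; e_2,\; e_2^-,\; e_2j_1e_1,\; e_1j_2e_2 .
$$
As before, one splits into two cases according to whether $e_2j_1e_1j_2e_2$ and its $\ast$-image are linearly dependent or independent over $F$. In the dependent case one writes $(e_2j_1e_1j_2e_2)^\ast=\alpha\,e_2j_1e_1j_2e_2$ with $\alpha=\pm1$ and passes to the quotient $B/I$, where $I$ is the $\ast$-ideal generated by the products encoding the forbidden paths of $Q$: chiefly the reverse loop $e_1j_2e_2j_1e_1$, the repeated-letter words such as $e_1j_1^\ast e_2j_1e_1$ and $e_2j_2^\ast e_1j_2e_2$, together with the exchange relations $e_2j_1e_1-e_2^-j_1e_1$ and $e_1j_2e_2-e_1j_2e_2^-$ that pin down which $F$-component of $e_2$ each half-path meets. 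One then checks that the surviving cosets
$$
e_1,\; e_2,\; e_2^-,\; e_2j_1e_1,\; e_1j_1^\ast e_2,\; e_1j_2e_2,\; e_2j_2^\ast e_1,\; e_2j_1e_1j_2e_2
$$
are linearly independent and span $B/I$, matching the eight parameters of $Q$.

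Finally I would define the candidate isomorphism $\theta\colon B/I\to Q$ by
$$
\theta(e_1)=e_{22}+e_{33},\quad \theta(e_2)=e_{11}+e_{44},\quad \theta(e_2^-)=e_{11}-e_{44},
$$
$$
\theta(e_2j_1e_1)=e_{12},\quad \theta(e_1j_1^\ast e_2)=e_{34},\quad \theta(e_1j_2e_2)=e_{24},
$$
$$
\theta(e_2j_2^\ast e_1)=\alpha e_{13},\quad \theta(e_2j_1e_1j_2e_2)=e_{14},
$$
and verify that it respects both the product and the involution, so that $B/I\cong\mathcal A_{11}$ when $\alpha=1$ and $B/I\cong\mathcal A_{12}$ when $\alpha=-1$; the compatibility with $\ast$ turns precisely on $e_{14}^{\theta_4}=e_{14}$ versus $e_{14}^{\sigma_4}=-e_{14}$, which is exactly what the scalar $\alpha=\pm1$ records. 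The linearly independent case reduces to the previous one by adjoining $e_2j_1e_1j_2e_2\mp(e_2j_1e_1j_2e_2)^\ast$ to the generators of $I$. Since $B/I$ is a $\ast$-subquotient of $A$, we obtain $Id^\ast(A)\subseteq Id^\ast(B/I)$, which gives the claim. The main obstacle is purely bookkeeping: choosing the generators of $I$ so that $B/I$ has exactly the eight-dimensional basis above, with no further collapse and no leftover products, and tracking the involution signs so that the single free scalar $\alpha$ cleanly distinguishes $\theta_4$ from $\sigma_4$.
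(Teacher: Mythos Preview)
Your approach is essentially identical to the paper's: the same $\ast$-subalgebra $B$, a $\ast$-ideal $I$ generated by the ``forbidden-path'' words plus two exchange relations, the same eight-element basis of $B/I$, and an explicit isomorphism onto $Q$ with $\alpha=\pm1$ selecting $\theta_4$ versus $\sigma_4$. The only cosmetic difference is that the paper routes $e_2j_1e_1\mapsto e_{13}$ and $e_1j_2e_2\mapsto e_{34}$ (placing the $\alpha$ on $e_1j_1^\ast e_2\mapsto\alpha e_{24}$), while you route through $e_{12}$ and $e_{24}$; both choices work.

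One bookkeeping slip to fix: with your map $e_1j_2e_2\mapsto e_{24}$, the element lands in the $e_{44}$-column, so $e_1j_2e_2\cdot e_2^-=-e_1j_2e_2$ in $Q$ and the correct exchange relation to impose is $e_1j_2e_2+e_1j_2e_2^-$ (as the paper has), not $e_1j_2e_2-e_1j_2e_2^-$; with the minus sign your $\theta$ would not annihilate $I$.
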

\begin{proof}
Let $B$ be the $\ast$-subalgebra of $A$ generated by the elements
		$$
		e_1,  e_2, e_2^-, e_2j_1e_1, e_1j_2e_2,
		$$
where $e_2^-= (1,-1)e_2.$
Suppose that $(e_2 j_1 e_1 j_2 e_2)^\ast=\alpha e_2 j_1 e_1 j_2 e_2$, for some $\alpha = \pm 1.$
Let $I$  be the $\ast$-ideal of $B$
		generated by the elements
		$$
		e_1j_2e_2j_1e_1, e_1j_2e_2j_2^\ast e_1,
        e_1j_1^\ast e_2j_1e_1, e_2j_1e_1j_1^\ast e_2,
        e_2j_2^\ast e_1j_2 e_2,
         e_1j_2e_2+e_1j_2e_2^-,
        e_2j_1e_1-e_2^-j_1e_1.
		$$

\noindent
The $\ast$-algebra $B/I$ is finite dimensional and the following elements
$$
e_1,  e_2,  e_2^-,  e_2j_1e_1, e_1j_1^\ast e_2, e_1j_2e_2, e_2j_2^\ast e_1,
e_2j_1e_1j_2e_2
$$
are  linearly independent and form a basis of $B$ mod $I$.
We identify these representatives with their cosets in $B/I$.
Now, we define a linear map $\zeta : B/I \rightarrow Q$  by setting

$$
\zeta(e_1)=e_{22}+e_{33}, \zeta(e_2)=e_{11}+e_{44}, \zeta(e_2^-)=e_{11}-e_{44}, \zeta(e_2j_1e_1)=e_{13},
$$
$$
 \zeta(e_1j_1^\ast e_2)=\alpha e_{24},
\zeta(e_1j_2e_2)=e_{34}, \zeta(e_2j_2^\ast e_1)= e_{12},
\zeta(e_2j_1e_1j_2e_2)= e_{14}.
$$

\bigskip

The map $\zeta$ extends to an isomorphism of $\ast$-algebras
		and $B/I\simeq\mathcal{A}_{11}$ in case $\alpha=1$ and  $B/I\simeq\mathcal{A}_{12}$ in case $\alpha=-1$.
It follows that either  $Id^\ast(\mathcal{A}_{11}) \supseteq  Id^\ast(A)$ or $Id^\ast(\mathcal{A}_{12}) \supseteq  Id^\ast(A)$.

Now, we suppose that the elements $e_2 j_1 e_1 j_2 e_2$ and $(e_2 j_1 e_1 j_2 e_2)^\ast$ are linearly independent over $F$.
In this case adding either
\[
e_2 j_1 e_1 j_2 e_2 - (e_2 j_1 e_1 j_2 e_2)^\ast
\quad \text{or} \quad
e_2 j_1 e_1 j_2 e_2 + (e_2 j_1 e_1 j_2 e_2)^\ast
\]
to the generators  of $I$ we get $B/I \simeq \mathcal{A}_{11}$ by choosing $\alpha =1$, or $\mathcal{A}_{12}$, by choosing $\alpha =-1,$ respectively.

The proof is now completed.
\end{proof}	

\bigskip

\begin{lemma} \label{lemmaA_{13},A_{14}}
  Let $A=A^{+} \oplus A^{-}$ be a finite-dimensional $\ast$-algebra.
		If $A$ contains two orthogonal symmetric idempotents $e_1,e_2$, with $e_1 \in F$ and $e_2\in (F\oplus F, exc)$, such that
		$$
        j_1e_1 j_2 e_2 j_3 \ne 0 \,\, \mathrm{or} \,\, j_1e_2 j_2 e_1 j_3 \ne 0
        $$
        for some $j_1,j_2,j_3\in A$, then  either $Id^\ast(A)\subseteq Id^\ast(\mathcal{A}_{13})$ or $Id^\ast(A)\subseteq Id^\ast(\mathcal{A}_{14})$.
	\end{lemma}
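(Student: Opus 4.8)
The strategy mirrors the preceding lemmas, and is closest to Lemma~\ref{lemmaA_7,A_8}, whose path $j_1e_1j_2e_2j_3e_3j_4$ likewise has two free ends: I will exhibit inside $A$ a finite-dimensional $\ast$-subalgebra $B$ and a $\ast$-ideal $I$ with $B/I\cong(R,\theta_6)=\mathcal{A}_{13}$ or $B/I\cong(R,\sigma_6)=\mathcal{A}_{14}$, which gives $Id^\ast(A)\subseteq Id^\ast(\mathcal{A}_{13})$ or $Id^\ast(A)\subseteq Id^\ast(\mathcal{A}_{14})$. First I would dispose of the disjunction: since $e_1,e_2$ are symmetric, $(j_1e_2j_2e_1j_3)^\ast=j_3^\ast e_1j_2^\ast e_2j_1^\ast$ is again of the first type and is nonzero exactly when $j_1e_2j_2e_1j_3$ is, so after applying $\ast$ and relabelling I may assume $w:=j_1e_1j_2e_2j_3\ne0$. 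As before I take $j_1,j_2,j_3\in A^+\cup A^-$, and in fact in the radical $J$, so that the free-end segments $j_1e_1$ and $e_2j_3$ carry no semisimple diagonal part; this is what turns the ``source'' and ``sink'' into the genuine radical nodes occupying positions $1$ and $6$ of $R$.

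Let $B$ be the $\ast$-subalgebra generated by $e_1,e_2$, the skew element $e_2^-=(1,-1)e_2$ of the corner $e_2Ae_2\cong(F\oplus F,\mathrm{exc})$, and the segments $j_1e_1,\ e_1j_2e_2,\ e_2j_3$; the involution then produces $e_1j_1^\ast,\ e_2j_2^\ast e_1,\ j_3^\ast e_2$. Since $R$ behaves like the path algebra of the quiver with the two $\ast$-conjugate routes $1\to2\to3\to6$ and $1\to4\to5\to6$, I let $I$ be the $\ast$-ideal generated by the two connecting relations $e_1j_2e_2-e_1j_2e_2^-$ and $e_2j_3-e_2^-j_3$ (which force the forward route to use $\frac{1}{2}(e_2+e_2^-)$ and, after applying $\ast$, the reverse route to use $\frac{1}{2}(e_2-e_2^-)$), together with free-end, backtracking and source--sink relations modelled on Lemma~\ref{lemmaA_7,A_8}, such as $e_1j_1e_1,\ e_2j_1e_1,\ e_2j_3e_2,\ e_2j_3j_1e_1,\ j_1e_1j_1^\ast,\ e_1j_2e_2j_2^\ast e_1,\ j_3^\ast e_2j_3$, all closed under $\ast$. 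Modulo $I$ the claimed basis of $B/I$ is
\begin{gather*}
e_1,\ e_2,\ e_2^-,\ j_1e_1,\ e_1j_1^\ast,\ e_1j_2e_2,\ e_2j_2^\ast e_1,\ e_2j_3,\ j_3^\ast e_2,\\
j_1e_1j_2e_2,\ e_2j_2^\ast e_1j_1^\ast,\ e_1j_2e_2j_3,\ j_3^\ast e_2j_2^\ast e_1,\ w .
\end{gather*}

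If $w$ and $w^\ast$ are linearly dependent, then $w^\ast=\alpha w$ with $\alpha=\pm1$, and the linear map $\xi\colon B/I\to R$ sending the fourteen basis elements respectively to
\begin{gather*}
e_{22}+e_{55},\ e_{33}+e_{44},\ e_{33}-e_{44},\ e_{12},\ e_{56},\ e_{23},\ e_{45},\\
e_{36},\ \alpha e_{14},\ e_{13},\ e_{46},\ e_{26},\ \alpha e_{15},\ e_{16}
\end{gather*}
extends to a $\ast$-isomorphism onto $(R,\theta_6)=\mathcal{A}_{13}$ when $\alpha=1$ and onto $(R,\sigma_6)=\mathcal{A}_{14}$ when $\alpha=-1$, the two being separated by the fact that the central image $e_{16}=\xi(w)$ is symmetric for $\theta_6$ and skew for $\sigma_6$. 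If instead $w$ and $w^\ast$ are linearly independent, I adjoin $w-w^\ast$ (resp.\ $w+w^\ast$) to the generators of $I$, forcing $\alpha=1$ (resp.\ $\alpha=-1$), and argue as above; in all cases $Id^\ast(A)\subseteq Id^\ast(B/I)$ yields the claim.

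The main obstacle is the dimension bookkeeping. For $\dim B/I\le14$ I must check that the connecting, free-end and backtracking relations reduce every word in the generators to one of the fourteen representatives or to $0$---that is, that all products are confined to the two admissible quiver routes; here it is essential, exactly as in Lemma~\ref{lemmaA_7,A_8}, that the free-end relations $e_1j_1e_1,e_2j_3e_2,\dots$ be radical, so that passing to $B/I$ does not annihilate $e_1$ or $e_2$ (this is why I took $j_i\in J$). For $\dim B/I\ge14$, i.e.\ the independence of the fourteen words, the hypothesis $w\ne0$ is decisive: it guarantees $w\notin I$, so that $\xi(w)=e_{16}\ne0$ and the top of the structure is not collapsed, while the remaining thirteen are independent because $\xi$ carries them to distinct matrix units. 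The only further delicate points are fixing the signs in the two connecting relations (so that the forward route and its $\ast$-image pick complementary primitive idempotents of $e_2$) and the consistent propagation of the single sign $\alpha$ through $\xi(j_3^\ast e_2)=\alpha e_{14}$, $\xi(j_3^\ast e_2j_2^\ast e_1)=\alpha e_{15}$ and $w^\ast=\alpha w$.
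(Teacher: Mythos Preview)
Your argument is correct and follows essentially the same construction as the paper: the generators of $B$, the $\ast$-ideal $I$ (including the two connecting relations $e_1j_2e_2-e_1j_2e_2^-$ and $e_2j_3-e_2^-j_3$), the fourteen basis elements of $B/I$, and the map $\xi$ to $R$ coincide with the paper's treatment of the case $j_1e_1j_2e_2j_3\ne0$. The only difference is that you dispose of the second case $j_1e_2j_2e_1j_3\ne0$ by applying the involution and relabelling (valid, since $e_1,e_2$ are symmetric), whereas the paper repeats the entire construction with a second explicit isomorphism $\chi:B/I\to R$; your reduction is legitimate and more economical.
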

\begin{proof}
First let $j_1e_1 j_2 e_2 j_3 \ne 0$. We consider $B$ the $\ast$-subalgebra of $A$ generated by the elements
		$$
		e_1,  e_2, e_2^-, j_1e_1,  e_1j_2e_2, e_2j_3,
		$$
where $e_2^-= (1,-1)e_2.$
Suppose that $(j_1e_1 j_2 e_2 j_3 )^\ast=\alpha j_1e_1 j_2 e_2 j_3 $, with $\alpha = \pm 1,$ and  let $I$ be the $\ast$-ideal of $B$ generated by the elements
	$$
		e_1j_1e_1, e_2j_1e_1, e_2j_3e_1, e_2j_3e_2,e_2j_3j_1e_1,
        e_1j_1^\ast j_1e_1, j_1e_1j_1^\ast
        $$
        $$
        e_2j_2^\ast e_1j_2 e_2,  e_1j_2e_2j_2^\ast  e_1,
     j_3^\ast e_2j_3,  e_2j_3j_3^\ast e_2,
        e_1j_2e_2-e_1j_2e_2^-,
        e_2j_3-e_2^-j_3.
		$$
    \black
\noindent
Then $B/I$ is finite-dimensional  and the following elements
$$
e_1,  e_2,  e_2^-,  j_1e_1, e_1j_2e_2, e_2j_3, j_1e_1j_2e_2, e_1j_2e_2j_3,$$
$$
j_1e_1j_2e_2j_3, e_1j_1^\ast, e_2j_2^\ast e_1, j_3^\ast e_2, e_2j_2^\ast e_1 j_1^\ast, j_3^\ast e_2 j_2^\ast e_1
$$
are  linearly independent and form a basis of $B$ mod $I$.
As in the previous lemmas we identify these representatives with the corresponding cosets.
Now, we consider a linear map $\xi: B/I \rightarrow R$  by setting
$$
\xi(e_1)=e_{22}+e_{55}, \xi(e_2)=e_{33}+e_{44}, \xi(e_2^-)=e_{33}-e_{44},  \xi(j_1e_1)=e_{12}, \xi(e_1j_2e_2)=e_{23},
$$
$$
\xi(e_2j_3)=e_{36}, \xi(j_1e_1j_2e_2)=e_{13}, \xi(e_1j_2e_2j_3)=e_{26},
\xi(j_1e_1j_2e_2j_3)= e_{16},
$$
$$
 \xi(e_1j_1^\ast)= e_{56},
 \xi(e_2j_2^\ast e_1)=e_{45},
\xi(j_3^\ast e_2)=\alpha e_{14}, \xi(e_2j_2^\ast e_1j_1^\ast)= e_{46},
\xi(j_3^\ast e_2j_2^\ast e_1)=\alpha e_{15}.
$$

\bigskip

The map $\xi$ extends to an isomorphism of $\ast$-algebras
		and $B/I\simeq\mathcal{A}_{13}$ in case $\alpha=1$ and  $B/I\simeq\mathcal{A}_{14}$ in case $\alpha=-1$.
It follows that either  $Id^\ast(\mathcal{A}_{13}) \supseteq  Id^\ast(A)$ or $Id^\ast(\mathcal{A}_{14}) \supseteq  Id^\ast(A)$.

Now, we suppose that the elements $j_1e_1 j_2 e_2 j_3$ and $(j_1e_1 j_2 e_2 j_3)^\ast$ are linearly independent over $F$. In this case if we add the element $j_1e_1 j_2 e_2 j_3-(j_1e_1 j_2 e_2 j_3)^\ast$ to the generators of $I$ then we get $B/I\simeq\mathcal{A}_{13}$ by choosing $\alpha =1$. Instead if we add the element $j_1e_1 j_2 e_2 j_3+(j_1e_1 j_2 e_2 j_3)^\ast$ to the generators of $I$ then we get $B/I\simeq\mathcal{A}_{14}$ by choosing $\alpha =-1$.

Let $j_1e_2 j_2 e_1 j_3 \ne 0$. We consider $B$ the $\ast$-subalgebra of $A$ generated by the elements
		$$
		e_1,  e_2, e_2^-, j_1e_2,  e_2j_2e_1, e_1j_3.
		$$
Suppose first  that $(j_1e_2 j_2 e_1 j_3  )^\ast=\alpha j_1e_2 j_2 e_1 j_3$, with $\alpha = \pm 1,$ and  let $I$ be the $\ast$-ideal of $B$ generated by the elements
	$$
		e_1j_1e_2, e_2j_1e_2, e_1j_3e_1, e_1j_3e_2, e_1j_3j_1e_2,
        e_2j_1^\ast j_1e_2, j_1e_2j_1^\ast
        $$
        $$
        e_1j_2^\ast e_2j_2 e_1,  e_2j_2e_1j_2^\ast  e_2,
     j_3^\ast e_1j_3,  e_1j_3j_3^\ast e_1,
        e_2j_2e_1+e_2^-j_2e_1,
        j_1e_2+j_1e_2^-.
		$$
    \black
\noindent
The $\ast$-algebra $B/I$ is spanned by the following elements
$$
e_1,  e_2,  e_2^-,  j_1e_2, e_2j_2e_1, e_1j_3, j_1e_2j_2e_1, e_2j_2e_1j_3,$$
$$
j_1e_2j_2e_1j_3, e_2j_1^\ast, e_1j_2^\ast e_2, j_3^\ast e_1, e_1j_2^\ast e_2 j_1^\ast, j_3^\ast e_1 j_2^\ast e_2.
$$
As in the previous lemmas we identify these representatives with the corresponding cosets.
Now, we consider a linear map $\chi: B/I \rightarrow R$  by setting
$$
\chi(e_1)=e_{22}+e_{55}, \chi(e_2)=e_{33}+e_{44}, \chi(e_2^-)=e_{33}-e_{44},  \chi(j_1e_2)=e_{14}, \chi(e_2j_2e_1)=e_{45},
$$
$$
\chi(e_1j_3)=e_{56}, \chi(j_1e_2j_2e_1)=e_{15}, \chi(e_2j_2e_1j_3)=e_{46},
\chi(j_1e_2j_2e_1j_3)=\ e_{16},
$$
$$
 \chi(e_2j_1^\ast)= \alpha e_{36},
 \chi(e_1j_2^\ast e_2)=e_{23},
\chi(j_3^\ast e_1)=e_{12}, \chi(e_1j_2^\ast e_2j_1^\ast)= \alpha e_{26},
\chi(j_3^\ast e_1j_2^\ast e_2)= e_{13}.
$$

\bigskip

The map $\chi$ extends to an isomorphism of $\ast$-algebras
		and $B/I\simeq\mathcal{A}_{13}$ in case $\alpha=1$ and  $B/I\simeq\mathcal{A}_{14}$ in case $\alpha=-1$.
It follows that either  $Id^\ast(\mathcal{A}_{13}) \supseteq  Id^\ast(A)$ or $Id^\ast(\mathcal{A}_{14}) \supseteq  Id^\ast(A)$.

Now, we suppose that the elements $j_1e_2j_2e_1j_3$ and $(j_1e_2j_2e_1j_3)^\ast$ are linearly independent over $F$. In this case if we add the element $j_1e_2j_2e_1j_3-(j_1e_2j_2e_1j_3)^\ast$ to the generators of $I$ then we get $B/I\simeq\mathcal{A}_{13}$ by choosing $\alpha =1$. Instead if we add the element $j_1e_2j_2e_1j_3+(j_1e_2j_2e_1j_3)^\ast$ to the generators of $I$ then we get $B/I\simeq\mathcal{A}_{14}$ by choosing $\alpha =-1$.

The proof is now completed.
\end{proof}

   Now we are in a position to state the main result of this section.

    \begin{theorem} \label{teorema1}
        Let $F$ be a field of characteristic zero and $\mathcal{V}^\ast$ a variety of  $\ast$-algebras over $F$.
        Then  $exp^{\ast,\delta}(\mathcal V^\ast)> 2$ if  $\mathcal{A_i}$ belongs to $\mathcal V^\ast$  for some $ i= 1, \ldots, 14.$
    \end{theorem}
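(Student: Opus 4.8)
The plan is to establish exactly the stated implication: if $\mathcal A_i\in\mathcal V^\ast$ for some $i\in\{1,\dots,14\}$, then $exp^{\ast,\delta}(\mathcal V^\ast)>2$. The point of departure is that the proper central $\ast$-exponent of each test algebra is already pinned down. By Lemma~\ref{exponent A1-A4} one has $exp^{\ast,\delta}(\mathcal A_i)=4$ for $i=1,\dots,4$, and by Lemma~\ref{exponent A5-A14} one has $exp^{\ast,\delta}(\mathcal A_i)=3$ for $i=5,\dots,14$; in every case
\[
exp^{\ast,\delta}(\mathcal A_i)\ \ge\ 3\ >\ 2 .
\]
Thus the entire task reduces to transferring this strict lower bound from the single algebra $\mathcal A_i$ to any $\ast$-variety that contains it.

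First I would restate the hypothesis at the level of $T^\ast$-ideals: $\mathcal A_i\in\mathcal V^\ast$ is equivalent to $\mathrm{var}^\ast(\mathcal A_i)\subseteq\mathcal V^\ast$, i.e.\ $\mathrm{Id}^\ast(\mathcal V^\ast)\subseteq\mathrm{Id}^\ast(\mathcal A_i)$. The whole weight of the argument then rests on the \emph{monotonicity} of the proper central $\ast$-exponent, namely that $\mathcal W^\ast\subseteq\mathcal V^\ast$ implies $exp^{\ast,\delta}(\mathcal W^\ast)\le exp^{\ast,\delta}(\mathcal V^\ast)$. Granting this and applying it to the inclusion $\mathrm{var}^\ast(\mathcal A_i)\subseteq\mathcal V^\ast$, one obtains at once
\[
exp^{\ast,\delta}(\mathcal V^\ast)\ \ge\ exp^{\ast,\delta}\bigl(\mathrm{var}^\ast(\mathcal A_i)\bigr)\ =\ exp^{\ast,\delta}(\mathcal A_i)\ \ge\ 3\ >\ 2,
\]
which is the assertion.

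The hard part will be the monotonicity step, and I want to stress that it is \emph{not} a formal consequence of the quotient definition of $c_n^{\ast,\delta}$: enlarging a variety shrinks its space of central $\ast$-polynomials, since $\mathrm{Id}^{\ast,z}(\mathcal V^\ast)\subseteq\mathrm{Id}^{\ast,z}(\mathrm{var}^\ast(\mathcal A_i))$, so a proper central polynomial of $\mathcal A_i$ need not remain central on $\mathcal V^\ast$, and the numerator and denominator of $c_n^{\ast,\delta}$ do not compare termwise. I would therefore run the comparison through the structural invariant recalled in the Preliminaries. Assuming $\mathcal V^\ast$ to be a proper (PI) $\ast$-variety, write $\mathrm{Id}^\ast(\mathcal V^\ast)=\mathrm{Id}^\ast(E(B))$ and $\mathrm{Id}^\ast(\mathcal A_i)=\mathrm{Id}^\ast(E(B_i))$ for finite-dimensional superalgebras with superinvolution $B,B_i$ (by \cite[Theorem 4]{AGK}, taking $B_i=\mathcal A_i$ with trivial grading when $\mathcal A_i$ is finite-dimensional). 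Since $exp^{\ast,\delta}(E(B))$ equals the maximal dimension of a centrally admissible subalgebra of $B$ \cite{MR2022}, monotonicity amounts to showing that the centrally admissible subalgebra of $B_i$ realizing $exp^{\ast,\delta}(\mathcal A_i)$ forces a centrally admissible subalgebra of $B$ of at least the same dimension along the inclusion $E(B_i)\in\mathrm{var}^\ast(E(B))$. This comparison of centrally admissible subalgebras is precisely what underlies the integrality of the proper central $\ast$-exponent and what makes the notion of a minimal $\ast$-variety of proper central $\ast$-exponent $d$ well posed in \cite{MR2022}, and I would invoke it in that form. Finally, if $\mathcal V^\ast$ is not PI the proper central $\ast$-codimensions grow super-exponentially and the inequality $exp^{\ast,\delta}(\mathcal V^\ast)>2$ is immediate, so the PI case above is the only one that requires the structural argument.
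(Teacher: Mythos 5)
Your proposal proves the converse of the implication that the paper's proof actually establishes. Although the theorem is worded ``$exp^{\ast,\delta}(\mathcal V^\ast)>2$ \emph{if} $\mathcal A_i\in\mathcal V^\ast$'', the paper's own proof begins with ``Let $exp^{\ast,\delta}(\mathcal V^\ast)>2$'' and ends by producing some $\mathcal A_i$ inside $\mathcal V^\ast$; this is also the result announced in the abstract and introduction, and it is the direction invoked in the Corollary that follows. So the ``if'' is a slip for ``only if'', and the real content of the theorem is the structural implication: write $\mathcal V^\ast=var^\ast(E(B))$ for a finite-dimensional superalgebra with superinvolution $B$, eliminate the large simple components of $B$ (which yields $\mathcal A_1,\dots,\mathcal A_4$), and then analyze a maximal centrally admissible subalgebra $D$ of $B$ via Lemmas \ref{lemmaA_5,A_6}, \ref{lemmaA_7,A_8}, \ref{lemmaA_9,A_{10}}, \ref{lemmaA_{11},A_{12}}, \ref{lemmaA_{13},A_{14}}, together with the delicate argument that a nonzero central evaluation $\bar f$ cannot avoid all the bordered products $e_i\cdots e_i$ and $j\cdots e_i \cdots j$ treated in those lemmas. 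None of this appears in your proposal, because you are proving a different statement.

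Moreover, the implication you do attempt cannot be proved, because under your literal reading it is false. The variety of \emph{all} $\ast$-algebras contains every $\mathcal A_i$, yet it has no proper central $\ast$-polynomials: if $[f,y]$ vanishes identically in $F\langle X,\ast\rangle$ for a fresh variable $y$, then $f$ is a scalar, so $P_n^\ast\cap Id^{\ast,z}=P_n^\ast\cap Id^\ast$ and $c_n^{\ast,\delta}\equiv 0$. In particular your dismissal of the non-PI case (``the proper central $\ast$-codimensions grow super-exponentially'') is exactly backwards. The same obstruction defeats the PI case: the monotonicity $\mathcal W^\ast\subseteq\mathcal V^\ast\Rightarrow exp^{\ast,\delta}(\mathcal W^\ast)\le exp^{\ast,\delta}(\mathcal V^\ast)$, which you rightly flag as non-formal but then merely ``invoke'', is not a theorem of \cite{MR2022} and is false as a general principle; already in the ordinary case $var(F)\subset var(UT_2(F))$, while every multilinear polynomial is central on the commutative algebra $F$ (so $exp^{\delta}(F)=1$) and, as is easily checked, every central polynomial of $UT_2(F)$ is an identity (so $exp^{\delta}(UT_2(F))=0$). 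The root of the problem is the one you yourself identified: a proper central $\ast$-polynomial of $\mathcal A_i$ need not remain central on the larger variety, and ``centrally admissible for $E(B)$'' requires by definition a proper central $\ast$-polynomial \emph{of $E(B)$}; your proposal supplies no mechanism for manufacturing one from a proper central polynomial of $E(B_i)$. That missing mechanism is precisely the step that fails.
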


    \begin{proof}

    Let $exp^{\ast,\delta}(\mathcal V^\ast)> 2$. We may assume that $\mathcal V^\ast = var^\ast(A)= var^\ast(E(B))$ where $E(B)=(E^{(0)}\otimes B^{(0)}) \oplus (E^{(1)}\otimes B^{(1)})$ is the Grassmann envelope of a finite-dimensional superalgebra with superinvolution $B=B^{(0)}\oplus B^{(1)}.$
        Write $B=B_1\oplus \cdots \oplus B_q +J$, where the $B_i$'s are simple superalgebras with superinvolution and let  $exp^{\ast,\delta}(\mathcal{V^\ast})=exp^{\ast,\delta}(A)= exp^{\ast,\delta}(E(B))> 2$.

      Suppose first that some $B_i\cong M_{k,2h}(F)$ with orthosymplectic superinvolution. If $h\ge1$, then
\[
E(B)\supseteq E(B_i)\cong E(M_{k,2h}(F)) \supseteq E^{(0)}\otimes
\begin{pmatrix} 0 & 0\\ 0 & M_{2h}(F)\end{pmatrix}
\]
and hence contains
\[
1\otimes \big(Fe_{k+1,k+1} \oplus Fe_{k+1,k+h+1} \oplus Fe_{k+h+1,k+1}\oplus Fe_{k+h+1,k+h+1}\big)
\cong (M_2(F),s).
\]
Therefore $Id^\ast(E(B))\subseteq Id^\ast((M_2(F),s))$, so $A_2= (M_2(F),s)\in\mathcal V^\ast$.

      \smallskip   If $h =0$ and $k \ge 2,$ then  $B_i \cong ( M_{k}(F), t)$ and
         $$
         E(B)\supseteq E(B_i) \cong E^{(0)} \otimes  M_{k}(F) \supseteq E^{(0)} \otimes  M_{2}(F).
         $$
      Hence $Id^\ast(E(B))\subseteq Id^\ast((M_2(F), t))$  and so $A_1=(M_2(F), t) \in \mathcal{V}^\ast.$

Hence we may assume that whenever $B_i\cong (M_{k,2h}(F),osp)$ we have $h=0$ and $k=1$, i.e. $B_i\cong F$ with the trivial superinvolution.

Next suppose for some $i$ that $B_i\cong M_{k,k}(F)$ with transpose superinvolution. If $k\ge2$ then
\[
E(B)\supseteq E(B_i)\supseteq E^{(0)}\otimes \big( \begin{pmatrix}M_k(F)&0\\0&M_k(F)\end{pmatrix},trp\big)
\supseteq E^{(0)}\otimes (H,t)\cong (M_k(F),t)\supseteq (M_2(F),t),
\]
where $H= \left\lbrace  \begin{pmatrix}
       A & 0   \\
       	0 & A
       \end{pmatrix} \,| \,A \in M_{k}(F) \right\rbrace $. Thus $A_2=(M_2(F),t)\in\mathcal V^\ast. $

       If $k=1$ then
$E(B)\supseteq E(B_i)\cong E((M_{1,1}(F),trp))=(M_{1,1}(E),\diamond)$ and hence $A_3\in \mathcal{V}^\ast$.
 \smallskip

Now suppose some $B_i\cong (M_{k,l}(F)\oplus M_{k,l}(F)^{sop},exc)$.

If $ k \ge 2,$  let  $K= \left\lbrace (U,U^t) \,| \, U \in M_{k}(F) \right\rbrace \subseteq (M_{k}(F) \oplus M_{k}(F)^{op}, exc)$
then
$$
E(B)\supseteq E((M_{k,l}(F) \oplus M_{k,l}(F)^{sop}, exc))\supseteq  (M_{k}(F) \oplus M_{k}(F)^{op}, exc))\supseteq
$$
$$
E^{(0)} \otimes  (K, exc) \cong (M_{k}(F),t) \supseteq (M_{2}(F),t).
$$

\noindent It follows that $Id^\ast(E(B)) \subseteq Id^\ast(E(B_i)) \subseteq Id^\ast((M_{2}(F),t))$  and so $A_1 \in \mathcal{V}^\ast.$

\smallskip

\noindent If $k =l=1,$ then we may consider $(M_{1,1}(F) \oplus M_{1,1}(F), \ast)$ with superinvolution defined by $(U,V)^\ast = (V^{trp},U^{trp}).$ Hence the map

$$ \psi: (M_{1,1}(F) \oplus M_{1,1}(F)^{sop},exc) \to (M_{1,1}(F) \oplus M_{1,1}(F), \ast)$$  defined by $ \psi ((U,V)) = (U, V^{trp})$ is an isomorphism of superalgebras with superinvolution. If we denote by $Q =\left\lbrace (A, A) \,| \, A \in M_{1,1}(F)\right\rbrace $ the $\ast$-subalgebra of $ (M_{1,1}(F) \oplus M_{1,1}(F), \ast)$ then
 $$
E(B)\supseteq E(B_i)\cong E((M_{1,1}(F) \oplus M_{1,1}(F)^{sop},exc))\cong E((M_{1,1}(F) \oplus M_{1,1}(F),\ast)) \supseteq $$
$$E((Q,\ast)) \cong E((M_{1,1}(F), trp))= (M_{1,1}(E),\diamond).
$$

\noindent Hence  if $B_i\cong (M_{k,l}(F) \oplus M_{k,l}(F)^{sop}, exc),$ we may assume that $k=1, l=0$ and so $B_i$ is isomorphic to the $\ast$-algebra $(F\oplus F,exc).$

 \medskip

Now let
$B_i\cong (M_{k}(F\oplus cF) \oplus M_{k}(F\oplus cF)^{sop}, exc).$
Then $$M_{k}(F\oplus cF)\oplus M_{k}(F\oplus cF)^{sop} \supseteq M_k(F) \oplus M_k(F)^{op} $$  and so
$E(M_{k}(F\oplus cF)\oplus M_{k}(F\oplus cF)^{sop}) \supseteq E^{(0)} \otimes (M_k(F) \oplus M_k(F)^{op}).$

If $k \ge 2,$ as in the previous case we obtain that $A_1 = (M_2(F),t) \in \mathcal{V}^\ast.$

If $k =1,$ then $B_i \cong (F+cF) \oplus (F+cF)^{sop} $ with exchange superinvolution. In this case we get that
$E(B)\supseteq E(B_i) \cong (E \oplus E^{op}, exc)$ and so
$Id^\ast(E(B)) \subseteq Id^\ast(E(B_i)) = Id^\ast((E \oplus E^{op}, exc)).$  Hence $A_4 \in \mathcal V^\ast.$

\medskip

       As an outcome of the above discussion we may assume that for every simple component $B_i$ we have either $B_i\cong F$ with trivial superinvolution or $B_i\cong F\oplus F$ with exchange involution. Let $D=B_1\oplus\cdots\oplus B_r$ be a centrally admissible $\ast$-subalgebra of $B$ of maximal dimension $d$. By definition there is a multilinear proper central $\ast$-polynomial
\[
f=f(x_1^+,\dots,x_u^+,x_1^-,\dots,x_v^-)
\]
of $E(B)$ with $u+v\ge r$ having a nonzero evaluation involving symmetric and skew-symmetric elements from each $E(B_i)$ for $i=1,\dots,r$.

Since $exp^{\ast,\delta}(E(B))=\dim D=d\ge3$, one of the following occurs:
\begin{enumerate}
  \item $D$ contains at least three simple components of type $F$;
  \item $D$ contains exactly one copy of $F$ and one copy of $F\oplus F$;
  \item $D$ contains only two copies of $F\oplus F$.
\end{enumerate}

        Let us assume first that Case 1 occurs. $D$ contains  three orthogonal symmetric idempotents $e_1,e_2,e_3$.
        Since $f$ is the  proper central $\ast$-polynomial corresponding to $D$ we have that in a non-zero evaluation $\bar f$ of $f$  at least three variables are evaluated in
        $$
        a_1\otimes e_1,\  a_2\otimes e_2,\  a_3\otimes e_3,
        $$
        where $a_1,a_2,a_3\in E^{(0)}$ are distinct elements.
        Since $E^{(0)}$ is central in $E$ and $f$ is multilinear, we may assume that $a_1=a_2=a_3=1$
        and, so,  the elements $1\otimes e_i,\  1\le i \le 3$ appear in the evaluation of $f$.

        Suppose first that a monomial of $f$ can be evaluated into a product of the type
        $$
        (1\otimes e_1)(b_1\otimes j_1)(1\otimes e_2)(b_2\otimes j_2) (1\otimes e_3)(b_3\otimes j_3)(1\otimes e_1)\ne 0,
        $$
        for some  $b_1\otimes j_1, b_2\otimes j_2, b_3\otimes j_3  \in E^{(0)}\oplus J^{(0)} \cup  E^{(1)}\oplus J^{(1)}$.
Then $e_1j_1e_2j_2e_3j_3e_1\ne 0$ and, by Lemma \ref{lemmaA_5,A_6}, we get that either $Id^\ast(A_5) \supseteq Id^\ast(A)$ or $Id^\ast(A_6) \supseteq Id^\ast(A)$ and we are done in this case. This clearly holds for any permutation of the $e_i$'s, $ i=1,2,3$.

     Hence if $m$ is a monomial of $f$ whose evaluation $\bar m$ in $ \bar f$ is non zero, we may assume  that  $(1\otimes e_i)\bar m(1\otimes e_i)=0$, $1\le i\le 3$. This also says that  $(1\otimes e_i)\bar f(1\otimes e_i)=0$  and since $\bar f$ is central   $(1\otimes e_i)\bar f = \bar f(1\otimes e_i)=0.$

        Next suppose that there exists an evaluation of a monomial of $f$ containing a product of the type
        $$
         (b_1\otimes j_1)(1\otimes e_1) (b_2\otimes j_2)(1\otimes e_2)
         (b_3\otimes j_3)(1\otimes e_3) (b_4\otimes j_4)\ne 0,
        $$
        for some   $b_1\otimes j_1, b_2\otimes j_2, b_3\otimes j_3, b_4\otimes j_4  \in E^{(0)}\oplus J^{(0)} \cup  E^{(1)}\oplus J^{(1)}$. Then $j_1e_1j_2e_2j_3e_3j_4\ne 0$.
        By Lemma \ref{lemmaA_7,A_8}, we get that either   $Id^\ast(A_7) \supseteq Id^\ast(A)$ or  $Id^\ast(A_8)  \supseteq Id^\ast(A)$  and we are done.

        Hence we may assume that in  $\bar f$ an evaluation of a monomial never starts and ends with elements of $J.$

        Let us set $\bar e_i=1\otimes e_i$, $1\le i\le 3$, and write
        $$
        \bar f = \bar e_1\bar f_1+ \bar f_2 \bar e_1+ \bar e_2 \bar f_3 + \bar f_4 \bar e_2+ \bar e_3\bar f_5+ \bar f_6 \bar e_3,
        $$

        \noindent where $\bar e_i\bar f_{2i-1}$  is the sum of the evaluations of the monomials of $f$ starting  with $\bar e_i$, and
        $\bar f_{2i} \bar e_i$ is the  sum of the evaluations of the remaining monomials of $f$ ending with $\bar e_i$, with $i=1,2,3$.

        By the first part of the proof we may assume that  $\bar e_i\bar f_j \bar e_i=0$ for $1 \le i \le 3$ and $ 1 \le j \le 6
        $. Since $\bar e_1\bar f=\bar f \bar e_2=\bar f \bar e_3=0$,  we get
        $$
        0=\bar e_1\bar f \bar e_2=\bar e_1\bar f_1 \bar e_2 + \bar e_1\bar f_4 \bar e_2,
        $$
        $$
        0=\bar e_1\bar f \bar e_3=\bar e_1\bar f_1 \bar e_3 + \bar e_1\bar f_6 \bar e_3.
        $$
        Moreover, from
        $$
        \bar e_1\bar f+\bar f \bar e_2+\bar f \bar e_3=0
        $$

        \noindent  and by using the previous equalities, we obtain that

        $$
        \bar e_1\bar f_1 + \bar f_4 \bar e_2 + \bar f_6 \bar e_3=-\bar e_3 \bar f_5 \bar e_2 - \bar e_2 \bar f_3 \bar e_3.
        $$

        \bigskip
        \noindent Similarly,
        since $\bar f \bar e_1=\bar e_2 \bar f =\bar e_3 \bar f =0$ we get
        $$
        \bar f_2 \bar e_1 + \bar e_2 \bar f_3 +\bar e_3 \bar f_5 =-\bar e_2 \bar f_6 \bar e_3 - \bar e_3 \bar f_4 \bar e_2.
        $$
        In conclusion we have that
        $$
        \bar f= \bar e_1 \bar f_1+ \bar f_2 \bar e_1  +  \bar e_2 \bar f_3+ \bar f_4 \bar e_2+ \bar e_3 \bar f_5 + \bar f_6 \bar e_3
        =-\bar e_3 \bar f_5 \bar e_2 - \bar e_2 \bar f_3 \bar e_3-\bar e_2 \bar f_6 \bar e_3 - \bar e_3 \bar f_4 \bar e_2.
        $$
        Now, since
        $
        0=\bar f \bar e_2$ it follows that$-\bar e_3\bar f_5 \bar e_2 - \bar e_3 \bar f_4 \bar e_2=0.$ Then
        $\bar f=  - \bar e_2 \bar f_3 \bar e_3-\bar e_2 \bar f_6 \bar e_3$  and we reach
        $
        \bar f = \bar f \bar e_3=0,$
        a contradiction.
        This completes Case 1.

\medskip
        Now, we consider the second case. Then $D $ contains
        one copy of $F$ and one of $F \oplus F$. Let $e_1, e_2$ be orthogonal symmetric idempotents of $F$ and of $F \oplus F$ respectively.
         We remark that $ F \oplus F = (F \oplus F)^+ \oplus (F \oplus F)^- = Fe_2 \oplus F(1,-1)e_2= Fe_2 \oplus Fe_2^-.$
         It is clear that the multilinear proper central $\ast$-polynomial $f$ has a non-zero evaluation $\bar f$ in which
        at least one variable is evaluated in $1\otimes e_1$
        and one variable  in  $1\otimes e_2$ or  $1\otimes e_2^-.$

       Suppose first that there exists an evaluation of a monomial of $f$ such that
        $$
        (1\otimes e_1) (b_1\otimes j_1)(1\otimes e_2) (b_2\otimes j_2)(1\otimes e_1) \ne 0,
        $$
        for some $b_1\otimes j_1, b_2\otimes j_2 \in E^{(0)}\oplus J^{(0)} \cup  E^{(1)}\oplus J^{(1)}.$
        Then, $e_1j_1e_2j_2e_1\ne 0$ and,
        by Lemma \ref{lemmaA_9,A_{10}}, we obtain that  either
        $Id^\ast(A_9) \supseteq Id^\ast(A)$ or $Id^\ast(A_{10})\supseteq Id^\ast(A).$

        If there exists an evaluation of a monomial of $f$ such that
        $$
        (1\otimes e_1) (b_1\otimes j_1)(1\otimes e_2^-) (b_2\otimes j_2)(1\otimes e_1) \ne 0,
        $$
        for some $b_1\otimes j_1, b_2\otimes j_2 \in E^{(0)}\oplus J^{(0)} \cup  E^{(1)}\oplus J^{(1)},$ then $e_1 j_1 (1,-1)e_2 j_2 e_1 \ne 0.$ If we put $j_1'=j_1(1,-1)$ we are done by the  previous case.

        Next let us consider the case when there exists an evaluation of a monomial of $f$ such that
        $$
        (1\otimes  e_2)(b_1\otimes j_1)(1\otimes e_1) (b_2\otimes j_2)(1\otimes  e_2)\ne 0,
        $$
        for some $b_1\otimes j_1, b_2\otimes j_2 \in E^{(0)}\oplus J^{(0)} \cup  E^{(1)}\oplus J^{(1)}.$
By Lemma \ref{lemmaA_{11},A_{12}},  we get that $Id^\ast(A_{11})\supseteq Id^\ast(A)$ or $Id^\ast(A_{12})\supseteq Id^\ast(A).$
If there exists an evaluation of a monomial of $f$ such that
 $$(1\otimes  e_2^-)(b_1\otimes j_1)(1\otimes e_1) (b_2\otimes j_2)(1\otimes  e_2)\ne 0$$
 or
 $$(1\otimes  e_2)(b_1\otimes j_1)(1\otimes e_1) (b_2\otimes j_2)(1\otimes  e_2^-)$$
 or
 $$(1\otimes  e_2^-)(b_1\otimes j_1)(1\otimes e_1) (b_2\otimes j_2)(1\otimes  e_2^-)\ne 0,
 $$
for some $b_1\otimes j_1, b_2\otimes j_2 \in E^{(0)}\oplus J^{(0)} \cup  E^{(1)}\oplus J^{(1)},$ then  $e_2j_1e_1j_2e_2\ne 0$ and we are done by the previous case.

Now assume that a monomial of $f$ has an evaluation of the type
        $$
         (b_1\otimes j_1)(1\otimes e_1) (b_2\otimes j_2) (1\otimes e_2)
         (b_3\otimes j_3)\ne 0,
        $$
        for some $b_1\otimes j_1, b_2\otimes j_2, b_3\otimes j_3  \in E^{(0)}\oplus J^{(0)} \cup  E^{(1)}\oplus J^{(1)}$.
        Let $j_1e_1j_2e_2j_3 \ne 0.$
         By Lemma \ref{lemmaA_{13},A_{14}},
        it follows that $Id^\ast(A_{13})\supseteq Id^\ast(A)$ or $Id^\ast({A}_{14})\supseteq Id^\ast(A)$. The same result holds when $j_1e_1j_2e_2^-j_3 \ne 0.$

       Finally we consider the case when a monomial of $f$ has an evaluation of the type
        $$
         (b_1\otimes j_1)(1\otimes e_2)(b_2\otimes j_2)(1\otimes e_1)
         (b_3\otimes j_3)\ne 0,
        $$
        for some $b_1\otimes j_1, b_2\otimes j_2, b_3\otimes j_3  \in E^{(0)}\oplus J^{(0)} \cup  E^{(1)}\oplus J^{(1)}$.
        By Lemma \ref{lemmaA_{13},A_{14}}, we have that  $Id^\ast(A_{13})\supseteq Id^\ast(A)$ or $Id^\ast({A}_{14})\supseteq Id^\ast(A)$. The same result holds when $j_1e_2^-j_2e_1j_3 \ne 0.$

        As in the first case if $\bar m\ne 0$ is the evaluation in $\bar f$ of a monomial of $f$, we may assume that
        $(1\otimes e_1)\bar m (1\otimes e_1)=0$ and  $(1\otimes e_2)\bar m (1\otimes e_2)=0$. Then, since $\bar f$ is central,
        $(1 \otimes e_i) \bar f = \bar f (1 \otimes e_i) =0$,  $i=1,2$.
        If we denote $(1 \otimes e_i)=\bar e_i$, $i=1,2$, then  we may write, as in the previous case,
        $$
        \bar f = \bar e_1\bar f_1+ \bar f_2 \bar e_1+ \bar e_2 \bar f_3 + \bar f_4 \bar e_2.
        $$
        Since $\bar e_1\bar f=0$ we obtain that
        $\bar e_1\bar f_1  + \bar e_1\bar f_4 \bar e_2= 0$
        and so
        $\bar e_1\bar f_1 \bar e_2 + \bar e_1\bar f_4 \bar e_2= 0.$
        It follows that $\bar e_1\Bar{f}_1 = \bar e_1\Bar{f}_1\bar e_2.$
        Moreover, since $\bar f\bar e_2 =0$,  similarly we get
        $
        \bar e_1\bar f_1\bar e_2+ \bar f_4 \bar e_2= 0
        $
        and so
        $
        \bar e_1\bar f_1+\bar f_4 \bar e_2= 0.
        $
        As a consequence we obtain that
        $$
        \bar f =  \bar f_2 \bar e_1+ \bar  e_2 \bar f_3.
        $$
        Since
        $0= \bar e_2\bar f =  \bar e_2\bar f_2 \bar e_1+ \bar e_2 \bar f_3,$
        we get $\bar f =  \bar f_2 \bar e_1+ \bar e_2 \bar f_3= \bar f_2 \bar e_1 - \bar e_2\bar f_2 \bar e_1$ and so
        $0=\bar f \bar e_1= \bar f_2 \bar e_1 - \bar e_2\bar f_2 \bar e_1 = \bar f \neq 0$,  a contradiction.
        This completes Case 2.

       We remark that, if $D$ contains two copies of $F\oplus F$, then it also contains one copy of $F$ and one copy of $F\oplus F$.
        This says that Case 3 can be deduced from Case 2.
    \end{proof}

    \bigskip

	Next we recall the following
	
	\begin{definition}
		Let $\mathcal{V}^\ast$ be a variety of $\ast$-algebras. We say that $\mathcal{V}^\ast$ is minimal of proper central $\ast$-exponent $d\geq 2$ if $exp^{\ast,\delta}(\mathcal{V}^\ast)=d$ and for
		every proper subvariety $\mathcal{U}^\ast\subset \mathcal{V}^\ast$ we have that $exp^{\ast,\delta}(\mathcal{U}^\ast)<d$.
	\end{definition}
	
	\medskip

Let $\mathcal{V}_i^\ast=var^\ast(\mathcal{A}_i)$, for $1\le i\le 14$.
The next goal is to prove that $\mathcal{V}_i^\ast$, for $1\le i\le 14$, are the only minimal $\ast$-varieties of proper central $\ast$-exponent equal to $3$ or $4.$

 We begin by showing that these  $\ast$-varieties are not comparable, i.e., if $\mathcal{V}^\ast$ and $\mathcal{W}^\ast$ are any two such $\ast$-varieties then $\mathcal{V}^\ast\not \subseteq  \mathcal{W}^\ast$ and $\mathcal{W}^\ast\not \subseteq  \mathcal{V}^\ast$.

	\medskip

	\begin{proposition} \label{differentvarieties}
		For  $1\le i,j\le 14$, $i\neq j$, we have that  $\mathcal{V}^\ast_i \not \subseteq \mathcal{V}^\ast_j$.
	\end{proposition}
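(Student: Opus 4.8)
The plan is to show $\mathcal{V}^\ast_i \not\subseteq \mathcal{V}^\ast_j$ by exhibiting, for each ordered pair $(i,j)$ with $i \neq j$, a $\ast$-polynomial that is a $\ast$-identity of $\mathcal{A}_j$ but \emph{not} of $\mathcal{A}_i$. Since $\mathcal{V}^\ast_i \subseteq \mathcal{V}^\ast_j$ is equivalent to $\operatorname{Id}^\ast(\mathcal{A}_j) \subseteq \operatorname{Id}^\ast(\mathcal{A}_i)$, producing such a separating polynomial for every pair immediately gives the claim. First I would organize the $\mathcal{A}_i$ into the two groups already isolated by Lemmas \ref{exponent A1-A4} and \ref{exponent A5-A14}: those of proper central $\ast$-exponent $4$ (namely $\mathcal{A}_1,\dots,\mathcal{A}_4$) and those of proper central $\ast$-exponent $3$ (namely $\mathcal{A}_5,\dots,\mathcal{A}_{14}$). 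By the main characterization of $exp^{\ast,\delta}$ via maximal centrally admissible subalgebras (from \cite{MR2022}), this invariant is preserved under $\operatorname{Id}^\ast$-inclusion, so no $\mathcal{A}_i$ with exponent $3$ can lie in a variety generated by an $\mathcal{A}_j$ with exponent $4$ and vice versa; this instantly settles all cross-group pairs and reduces the problem to separating within each group.

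Within each group the separation is done by finding explicit non-identities. For the exponent-$4$ group, I would use the distinguishing $\ast$-polynomials already identified in the proof of Lemma \ref{exponent A1-A4}: for instance $x_1^-$ is an identity for $\mathcal{A}_2=(M_2(F),s)$ but $x_1^-x_2^-$ is a proper central (hence nonzero) polynomial for $\mathcal{A}_1=(M_2(F),t)$, and the Lie-type central polynomials $[x_1^-,x_2^-,x_1^+]$ and $[x_1^+,x_2^+]$ behave differently on $\mathcal{A}_3$ and $\mathcal{A}_4$; one checks which of these evaluate to zero on the other three algebras. A compact way to separate $\mathcal{A}_1,\dots,\mathcal{A}_4$ is to compare $\dim A^+$ versus $\dim A^-$ and the standard identities each satisfies: $\mathcal{A}_2$ satisfies all skew elements being central-killed differently than $\mathcal{A}_1$, while $\mathcal{A}_3,\mathcal{A}_4$ are infinite-dimensional Grassmann constructions distinguished from the finite-dimensional $\mathcal{A}_1,\mathcal{A}_2$ by a suitable Grassmann-sensitive identity such as $[[x_1^+,x_2^+],[x_3^+,x_4^+]]$, which vanishes on one family but not the other.

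For the exponent-$3$ group $\mathcal{A}_5,\dots,\mathcal{A}_{14}$, the separating polynomials are essentially the centrality witnesses recorded in the proof of Lemma \ref{exponent A5-A14}, together with their failure on neighboring algebras. Concretely, each $\mathcal{A}_i$ in this group was constructed so that a specific product of commutators of symmetric (or one skew) variables lands in the one-dimensional socle $Z(\mathcal{A}_i)$; I would verify that the matching commutator polynomial of the \emph{correct length and parity} is a proper central polynomial for its designated algebra yet is an identity for the others, owing to the differing matrix sizes ($6\times6$ versus $8\times8$ versus $4\times4$) and the symmetric-versus-symplectic choice of involution ($\theta$ versus $\sigma$), which flips exactly one sign and thereby changes whether the final variable must be symmetric or skew. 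The main obstacle will be the bookkeeping: there are $14\cdot13$ ordered pairs, and one must confirm for each that the chosen polynomial genuinely separates, which requires checking evaluations against the explicit idempotent/radical structure of every $\mathcal{A}_i$. I expect this to be handled not case-by-case but by extracting a small number of invariants—the proper central $\ast$-exponent, the dimension, the reflection/symplectic type, and which parity of variable is forced in the central witness—and arguing that these invariants already pairwise distinguish all fourteen algebras, so that for any $i\neq j$ at least one invariant differs and supplies the separating identity.
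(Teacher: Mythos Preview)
Your overall strategy---separating the $\mathcal{V}^\ast_i$ by exhibiting explicit $\ast$-identities that hold in one algebra and fail in another---matches the paper's approach. However, there is a genuine gap in your reduction step and at least one factual error in the details.

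\medskip
\textbf{The exponent argument only settles one cross-group direction.} You claim that because $exp^{\ast,\delta}$ is ``preserved under $\operatorname{Id}^\ast$-inclusion,'' neither an exponent-$3$ algebra can lie in an exponent-$4$ variety nor vice versa, so ``this instantly settles all cross-group pairs.'' That is false. The monotonicity you have is: if $\mathcal{V}^\ast_i \subseteq \mathcal{V}^\ast_j$ then $exp^{\ast,\delta}(\mathcal{V}^\ast_i) \le exp^{\ast,\delta}(\mathcal{V}^\ast_j)$. This rules out $\mathcal{V}^\ast_j \subseteq \mathcal{V}^\ast_i$ when $j\in\{1,\dots,4\}$ and $i\in\{5,\dots,14\}$ (since $4\not\le 3$), but it says nothing about the reverse inclusion $\mathcal{V}^\ast_i \subseteq \mathcal{V}^\ast_j$ with $i\in\{5,\dots,14\}$ and $j\in\{1,\dots,4\}$: a variety of smaller exponent can certainly sit inside one of larger exponent. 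The paper handles this direction by producing, for each $j\in\{1,2,3,4\}$, a polynomial lying in $\operatorname{Id}^\ast(\mathcal{A}_j)$ but in none of $\operatorname{Id}^\ast(\mathcal{A}_5),\dots,\operatorname{Id}^\ast(\mathcal{A}_{14})$ (for instance $[x_1^-x_2^-,x_3^-]$ for $j=1$, $[x_1^+,x_2^+]$ for $j=2,3$, and $[x_1^+,x_2^+,x_3^+]$ for $j=4$). You still owe these separations.

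\medskip
\textbf{A concrete error.} You assert that $x_1^-$ is a $\ast$-identity of $\mathcal{A}_2=(M_2(F),s)$. It is not: under the symplectic involution on $M_2(F)$ the skew part $(M_2(F))^-$ is the three-dimensional space of trace-zero matrices, so $x_1^-$ is far from vanishing. (What is true is that $x_1^+$ is central there, which is what Lemma~\ref{exponent A1-A4} records.) More generally, your within-group separations remain at the level of heuristics (``dimension,'' ``parity,'' ``$\theta$ versus $\sigma$'') rather than actual identities; the paper proceeds by writing down, for each pair, an explicit polynomial and checking membership and non-membership directly. Your proposed invariants would have to be translated into such polynomials before they constitute a proof, and the example above shows that this translation requires care.
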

    \begin{proof}
    We start by comparing among themselves varieties of the same proper central $\ast$-exponent.

 By an easy check of the following  statements we can say that $\mathcal{V}^\ast_i \not \subseteq \mathcal{V}^\ast_j$, for  $1\le i,j\le 4$, $i\neq j$.
		
		\begin{itemize}
			\item[-]
			
			$[x_1^-x_2^-,x_3^-] \in Id^\ast(\mathcal{A}_1)$ but
			$\not \in Id^\ast(\mathcal{A}_2)$ and $\not \in Id^\ast(\mathcal{A}_3)$,
			\smallskip

			\item[-]
			
			$[x_1^+,x_2^+]\in Id^\ast(\mathcal{A}_2)$ and $\in Id^\ast(\mathcal{A}_3)$ but $\not \in  Id^\ast(\mathcal{A}_1)$ and $\not \in  Id^\ast(\mathcal{A}_4)$,
			\smallskip
			
			\item[-]
			$[x_1^+,x_1^-]\in Id^\ast(\mathcal{A}_2)$ but
			$ \not \in Id^\ast(\mathcal{A}_3)$,
			
			\smallskip
			
			\item[-]
    			$[x_1^-,x_2^-][x_3^-,x_4^-] \in Id^\ast(\mathcal{A}_3)$ but $  \not \in   Id^\ast(\mathcal{A}_2)$,
			
			\smallskip
			
			\item[-] $[x_1^-,x_2^-,x_3^-] \in Id^\ast(\mathcal{A}_4)$ but $\not \in Id^\ast(\mathcal{A}_2)$ and $\not \in Id^\ast(\mathcal{A}_3)$,

            \smallskip
			
			\item[-] $[x_1^-x_2^-,x_1^+] \in Id^\ast(\mathcal{A}_1)$ but $\not \in Id^\ast(\mathcal{A}_4)$,
            \smallskip
			
			\item[-] $[x_1^+,x_2^+,x_3^+] \in Id^\ast(\mathcal{A}_4)$ but $\not \in Id^\ast(\mathcal{A}_1)$.
		\end{itemize}

Now, consider the varieties $\mathcal{V}^\ast_i$ for $5 \le i,j \le 14$, $i \neq j$.
By the following  statements we have that $\mathcal{V}^\ast_i \not \subseteq \mathcal{V}^\ast_j$, for  $5\le i,j\le 14$, $i\neq j$.
		\begin{itemize}
			\item[-]
			
			$[x_1^-x_2^-,x_3^-] \in Id^\ast(\mathcal{A}_5)$ but
			$\not \in Id^\ast(\mathcal{A}_i)$ for $6\le i\le 8$,
			\smallskip

			\item[-]
			
			$[x_1^-,x_2^- ]\circ x_3^-\in Id^\ast(\mathcal{A}_6)$  but $\not \in  Id^\ast(\mathcal{A}_5)$ and $\not \in  Id^\ast(\mathcal{A}_i)$, with $i=7, 8$,
			\smallskip
			
			\item[-]
			$x_1^+[x_2^+,x_3^+][x_4^+,x_5^+][x_6^+,x_7^+]x_8^+\in Id^\ast(\mathcal{A}_i)$, with $i=7, 8$, but
			$ \not \in Id^\ast(\mathcal{A}_j)$, for $j=5, 6$,
			
			\smallskip
			
			\item[-]
    			$[x_1^-,x_2^-][x_3^-,x_4^-] \in Id^\ast(\mathcal{A}_i),$ for $j=5, 6$, but $  \not \in   Id^\ast(\mathcal{A}_j)$, with $9\le j\le 14$,
			
			\smallskip
			
			\item[-]  $[x_1^+,x_2^+][x_3^+, x_4^+][x_5^+, x_6^+] \in Id^\ast(\mathcal{A}_i)$ for $9\le i \le 12$ but $\not \in Id^\ast(\mathcal{A}_j)$  with $5\le j \le 8$ or $j=13,14$,

            \item[-]  $[x_1^+,x_2^+][x_3^+, x_4^+][x_5^+, x_6^+]x_7^+\in Id^\ast(\mathcal{A}_i)$ for $ i =13,14$ but $\not \in Id^\ast(\mathcal{A}_j)$  with $5\le j \le 8$,

            \smallskip
			
			\item[-] $[x_1^-x_2^-,x_3^-, x_4^-] \in Id^\ast(\mathcal{A}_7)$ but $\not \in Id^\ast(\mathcal{A}_8)$,

            \smallskip
			
			\item[-] $[x_1^-,x_2^-,x_3^-] \circ x_4^- \in Id^\ast(\mathcal{A}_8)$ but $\not \in Id^\ast(\mathcal{A}_7)$,

            \smallskip
			
			\item[-] $[x_1^-x_2^-][x_3^-, x_4^-] s_1^+\in Id^\ast(\mathcal{A}_i)$ with $i=7,8$ but $\not \in Id^\ast(\mathcal{A}_j)$, for $9\le j \le 14$,

            \smallskip

            \item[-] $[[x_1^-x_2^-][x_3^-, x_4^-]]\in Id^\ast(\mathcal{A}_i)$ with $i=9,11$ but $\not \in Id^\ast(\mathcal{A}_j)$, for $j =10, 12$,

            \smallskip

             \item[-] $[x_1^-x_2^-]\circ [x_3^-, x_4^-]\in Id^\ast(\mathcal{A}_i)$ with $i=10, 12$ but $\not \in Id^\ast(\mathcal{A}_j)$, for $j =9,11$,

            \smallskip

             \item[-] $[x_1^-x_2^-] [x_3^-, x_4^-]x_5^-\in Id^\ast(\mathcal{A}_i)$ with $i=9,10$ but $\not \in Id^\ast(\mathcal{A}_j)$, for $11\le j \le 14$,

            \smallskip

            \item[-]  $[x_1^-x_2^-] x_3^- [x_4^-, x_5^-]\in Id^\ast(\mathcal{A}_i)$ with $i=11,12$ but $\not \in Id^\ast(\mathcal{A}_j)$, for $j=9,10,13, 14$,\black

            \smallskip

             \item[-] $[[[x_1^-x_2^-],  [x_3^-, x_4^-]], x_5^-]\in Id^\ast(\mathcal{A}_{13})$ but $\not \in Id^\ast(\mathcal{A}_{14})$,

            \smallskip

             \item[-] $[[x_1^-x_2^-],  [x_3^-, x_4^-]] \circ x_5^-\in Id^\ast(\mathcal{A}_{14})$ but $\not \in Id^\ast(\mathcal{A}_{13})$,

            \smallskip
		\end{itemize}

        At this point we compare among themselves varieties of different proper central $\ast$-exponent. It is easy to see that $x_1^-x_2^-x_3^-x_4^-x_5^-\in  Id^\ast(\mathcal{A}_i)$, for $5\le i \le 14$ and $\not \in Id^\ast(\mathcal{A}_j)$ with $j=1,2,3,4$.

        Finally, the proof is completed by remarking that $[x_1^-x_2^-,x_3^-]\in Id^\ast(\mathcal{A}_{1})$ but $\not \in Id^\ast(\mathcal{A}_j)$, for $5\le j \le 14$,
        $[x_1^+x_2^+]\in Id^\ast(\mathcal{A}_{i})$ with $i=2,3$ but $\not \in Id^\ast(\mathcal{A}_j)$, for $5\le j \le 14$ and $[x_1^+x_2^+,x_3^+]\in Id^\ast(\mathcal{A}_{4})$ but $\not \in Id^\ast(\mathcal{A}_j)$, for $5\le j \le 14$.

\end{proof}

As a consequence, we get the following
	
	\begin{corollary}  A $\ast$-variety $\mathcal{V}^\ast$ is minimal  of proper central $\ast$-exponent equal to $4$ if and only if $\mathcal{V}^\ast=\mathcal{V}^\ast_i,$  for some  $i=1, \ldots ,4$.
		A $\ast$-variety $\mathcal{V}^\ast$ is minimal  of proper central $\ast$-exponent equal to $3$ if and only if  $\mathcal{V}^\ast=\mathcal{V}^\ast_i,$  for some  $i=5, \ldots , 14$.
        \end{corollary}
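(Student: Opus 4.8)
The plan is to combine three facts established above. First, the exponent computations (Lemma~\ref{exponent A1-A4} and Lemma~\ref{exponent A5-A14}) give $exp^{\ast,\delta}(\mathcal V_i^\ast)=exp^{\ast,\delta}(\mathcal A_i)=4$ for $1\le i\le 4$ and $=3$ for $5\le i\le 14$, since the proper central $\ast$-exponent is an invariant of the $T^\ast$-ideal. Second, Theorem~\ref{teorema1}, in the form established by its proof, guarantees that $exp^{\ast,\delta}(\mathcal W^\ast)>2$ implies $\mathcal A_j\in\mathcal W^\ast$ for some $j$. Third, Proposition~\ref{differentvarieties} gives $\mathcal V_i^\ast\not\subseteq\mathcal V_j^\ast$ for $i\ne j$. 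Since the Corollary is an equivalence, I would prove the two implications separately.

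For the ``if'' direction I fix $i$ and set $d=exp^{\ast,\delta}(\mathcal A_i)\in\{3,4\}$; by the lemmas $exp^{\ast,\delta}(\mathcal V_i^\ast)=d$, so only minimality needs checking. Let $\mathcal U^\ast\subsetneq\mathcal V_i^\ast$ be proper and suppose, for contradiction, that $exp^{\ast,\delta}(\mathcal U^\ast)\ge d\ge 3>2$. Then Theorem~\ref{teorema1} applied to $\mathcal U^\ast$ yields $\mathcal A_j\in\mathcal U^\ast$, whence $\mathcal V_j^\ast\subseteq\mathcal U^\ast\subseteq\mathcal V_i^\ast$; by Proposition~\ref{differentvarieties} this forces $j=i$, and then $\mathcal A_i\in\mathcal U^\ast$ gives $\mathcal V_i^\ast\subseteq\mathcal U^\ast$, contradicting $\mathcal U^\ast\subsetneq\mathcal V_i^\ast$. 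Hence every proper subvariety has $exp^{\ast,\delta}(\mathcal U^\ast)<d$, so $\mathcal V_i^\ast$ is minimal of proper central $\ast$-exponent $d$. Notice this uses no monotonicity of the exponent: I only need that an exponent $\ge 3$ triggers Theorem~\ref{teorema1}.

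For the ``only if'' direction let $\mathcal V^\ast$ be minimal of proper central $\ast$-exponent $d\in\{3,4\}$. As $d>2$, Theorem~\ref{teorema1} gives $\mathcal A_j\in\mathcal V^\ast$, so $\mathcal V_j^\ast\subseteq\mathcal V^\ast$. If $d=3$, then $j\ge 5$ is forced: were $j\le 4$ we would have $exp^{\ast,\delta}(\mathcal V_j^\ast)=4$, and since the exponents $4$ and $3$ differ we get $\mathcal V_j^\ast\subsetneq\mathcal V^\ast$, so minimality would demand $4<3$, absurd. Thus $j\ge 5$, $exp^{\ast,\delta}(\mathcal V_j^\ast)=3=d$, and minimality forbids $\mathcal V_j^\ast$ from being proper, giving $\mathcal V^\ast=\mathcal V_j^\ast$ with $5\le j\le 14$. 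If $d=4$, the clean case is $j\le 4$: then $exp^{\ast,\delta}(\mathcal V_j^\ast)=4=d$ and the same minimality argument forces $\mathcal V^\ast=\mathcal V_j^\ast$ with $1\le j\le 4$.

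The remaining and hardest point is to exclude, for a minimal $\mathcal V^\ast$ of exponent $4$, the possibility that \emph{every} $\mathcal A_j\in\mathcal V^\ast$ has $j\ge 5$ (hence exponent $3$, which is consistent with $3<4$ and yields no immediate contradiction). Here I would re-enter the proof of Theorem~\ref{teorema1}: writing $\mathcal V^\ast=var^\ast(E(B))$ with $B=B_1\oplus\cdots\oplus B_q+J$, its first part shows that as soon as some simple component $B_i$ is ``large'' (not isomorphic to $F$ or to $F\oplus F$) one of $\mathcal A_1,\dots,\mathcal A_4$ already lies in $\mathcal V^\ast$, which then finishes the case via the previous paragraph. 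So the obstacle reduces to the structural claim that if every simple component of $B$ is of type $F$ or $F\oplus F$ then $exp^{\ast,\delta}(E(B))\le 3$; equivalently, proper central $\ast$-exponent $4$ can only be realized through a simple component of dimension $4$. This is what one must extract from the centrally-admissible analysis of cases (1)--(3): a centrally admissible subalgebra assembled only from $F$- and $(F\oplus F)$-blocks is certified by its central polynomial solely on a sub-configuration of three $F$-blocks, or of one $F$-block and one $(F\oplus F)$-block, i.e.\ of dimension $3$. Making this bound rigorous — ruling out a genuinely four-dimensional centrally admissible configuration of small blocks (such as two $(F\oplus F)$-blocks) — is the main obstacle; once it is secured, exponent $4$ forces a large component and the $d=4$ case is complete.
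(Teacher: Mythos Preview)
Your ``if'' direction and the $d=3$ case of ``only if'' are correct and coincide with the paper's approach; the paper compresses both into two sentences, and your version merely spells out the contradiction via Theorem~\ref{teorema1} and Proposition~\ref{differentvarieties} more explicitly. You are also right that the $d=4$ ``only if'' case needs more: if the $\mathcal A_j$ supplied by Theorem~\ref{teorema1} has $j\ge 5$, then $\mathcal V_j^\ast\subsetneq\mathcal V^\ast$ has proper central $\ast$-exponent $3<4$, which is compatible with minimality of $\mathcal V^\ast$ and yields no contradiction. The paper's own proof at this step is the single clause ``by Theorem~\ref{teorema1} it must coincide with $\mathcal V_i^\ast$'' and does not address this case either, so the gap you have located is present in the paper as well.

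Your proposed remedy, however, overshoots. The claim that $exp^{\ast,\delta}(E(B))\le 3$ whenever every simple component of $B$ is $F$ or $(F\oplus F,exc)$ is not what the proof of Theorem~\ref{teorema1} establishes---that proof only shows, from $\dim D\ge 3$, that some $\mathcal A_j$ with $5\le j\le 14$ lies in $\mathcal V^\ast$; it never bounds $\dim D$. And the claim itself is doubtful: a four-idempotent analogue of $\mathcal A_5$ (a suitable $\theta_8$-stable subalgebra of $UT_8(F)$ with $J^4=Fe_{18}\subseteq Z$) appears to furnish a centrally admissible subalgebra of dimension $4$. What would actually close the $d=4$ case is the weaker statement that no such variety is \emph{minimal} of exponent $4$, i.e.\ that any exponent-$4$ variety built only from $F$- and $(F\oplus F)$-blocks admits a proper subvariety still of exponent $4$; but neither your proposal nor the paper supplies this argument.
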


	\begin{proof}
    By Lemmas \ref{exponent A1-A4}, \ref{exponent A5-A14},
$exp^{\ast,\delta}(\mathcal{V}^\ast_i)=4$, for $1 \le i \le 4$ and $exp^\delta(\mathcal{V}^\ast_i)=3$, for $5\le i\le 14$. Now,
let $\mathcal{U}^\ast$ be a proper subvariety of $\mathcal{V}^\ast_i$, for some $i \in \{ 1, \ldots , 14\}$,
and suppose that $exp^{\ast,\delta}(\mathcal{U}^\ast)\ge$ $exp^{\ast,\delta}(\mathcal{V}^\ast_i)> 2$.	
		Then, by Theorem \ref{teorema1} and  Proposition \ref{differentvarieties},  we obtain  a contradiction.

 Now if $\mathcal{V}^\ast$ is a minimal $\ast$-variety  of proper central $\ast$-exponent equal to $3$ or $4$, then by Theorem \ref{teorema1} it must be coincide with $\mathcal{V}^\ast_i,$  for some $1\le i\le 14$.
	\end{proof}
	
	\bigskip

\end{document}